\documentclass[11pt]{amsart}

\usepackage[T1]{fontenc}
\usepackage{lmodern}
\usepackage{amsmath,amssymb,mathtools}
\usepackage{enumitem}
\usepackage{microtype}
\usepackage{needspace}
\usepackage[colorlinks=true,linkcolor=blue,citecolor=blue,urlcolor=blue]{hyperref}

\setlist[enumerate]{label=(\roman*),leftmargin=2.2em}
\setlist[itemize]{leftmargin=1.8em}

\newtheorem{theorem}{Theorem}[section]
\newtheorem{lemma}[theorem]{Lemma}
\newtheorem{proposition}[theorem]{Proposition}
\newtheorem{corollary}[theorem]{Corollary}

\theoremstyle{definition}

\newtheorem{remark}[theorem]{Remark}

\newcommand{\calO}{\mathcal O}

\title[The prescribed-vertex threshold for $C_{3q}$]{The Prescribed-Vertex Semidegree Threshold for Directed $3q$-Cycles in Oriented Graphs}
\author{Zhenhua Lyu}
\address{School of Science, Shenyang Aerospace University, Shenyang 110136, China}
\email{lyuzhh@outlook.com}
\date{}
\hypersetup{
  pdftitle={The Prescribed-Vertex Semidegree Threshold for Directed 3q-Cycles in Oriented Graphs},
  pdfauthor={Zhenhua Lyu}
}

\subjclass[2020]{Primary 05C20; Secondary 05C35, 05C38}

\keywords{oriented graph, prescribed vertex, directed cycle, minimum semidegree, stability}

\begin{document}

\begin{abstract}
For every $q\ge2$, we prove that every oriented graph $G$ on
$n\ge45q-8$ vertices whose minimum semidegree satisfies
\[
  \delta^0(G)\ge \left\lceil\frac n3\right\rceil
\]
contains a directed cycle of length $3q$ through every vertex.  The
semidegree bound is sharp.  This closes the one-unit gap left by the
prescribed-vertex theorem of Kelly, K\"uhn and Osthus when $3\mid n$.
We also prove that  if an oriented graph
$H$ has order $N$, minimum semidegree $d\ge3$, and $7d\ge2N+3$, then
every ordered pair of distinct vertices is joined by a path of length
three, four, or five.
The constant $+3$ is best possible.  As a consequence, the order
hypothesis $n\ge10^{10}\ell$ in the general prescribed-vertex theorem
of Kelly, K\"uhn and Osthus can be replaced by $n\ge15\ell-60$ for
$\ell\ge7$.
\end{abstract}

\maketitle

\section{Introduction}

An \emph{oriented graph} is an orientation of a simple graph.  For a vertex $v$ of an oriented graph $G$, let $d^+(v)$ and $d^-(v)$ denote its outdegree and indegree.  Denote
\[
  \delta^\pm(G)=\min_{v\in V(G)}d^\pm(v),
  \qquad
  \delta^0(G)=\min\{\delta^+(G),\delta^-(G)\}.
\]
All paths and cycles in this paper are directed unless stated otherwise.  We write $C_\ell$ for the consistently oriented cycle of length $\ell$.

Degree conditions for directed cycles in oriented graphs, and in particular
for Hamilton cycles, are surveyed by Bermond and Thomassen~\cite{BT} and by
K\"uhn and Osthus~\cite{KOsurvey}.  We consider the minimum semidegree condition
that forces a directed cycle of a given length through each specified
vertex.

For $\ell\ge3$, let $\tau^v_\ell(n)$ be the least integer $d$ such
that every $n$-vertex oriented graph $G$ with $\delta^0(G)\ge d$
contains a copy of $C_\ell$ through every vertex.  The superscript
emphasizes that this is a prescribed-vertex, or vertex-rooted, threshold:
it is stronger than merely requiring one copy of $C_\ell$ somewhere in
the graph.

Kelly, K\"uhn and Osthus~\cite{KKO} proved that, for every $\ell\ge4$
and every $n\ge10^{10}\ell$, the condition
\[
  \delta^0(G)\ge \left\lfloor\frac n3\right\rfloor+1
\]
forces a copy of $C_\ell$ through any prescribed vertex.  When
$3\mid\ell$, their modified cyclic blow-up
\cite{KKO} gives a prescribed vertex
contained in no $C_\ell$ while
\[
  \delta^0(G)=\left\lfloor\frac{n-1}{3}\right\rfloor.
\]
Consequently, for $\ell=3q\ge6$ and $n\ge10^{10}\ell$,
\begin{equation}\label{eq:old-window}
  \left\lceil\frac n3\right\rceil
  \le \tau^v_{3q}(n)
  \le \left\lfloor\frac n3\right\rfloor+1.
\end{equation}
The two bounds coincide when $n\equiv1,2\pmod3$.  For $n=3m$,
however, they leave precisely the one-unit window
\[
  m\le \tau^v_{3q}(3m)\le m+1.
\]
Denote
\begin{equation}\label{eq:n0-def}
  n_0(q)=
  \begin{cases}
    37,&if~q=2,\\
    97,&if~q=3,\\
    45q-8,&if~q\ge4.
  \end{cases}
\end{equation}
We present the main results as follows.
\begin{theorem}\label{thm:main}
Let $q\ge2$ and $n\ge n_0(q)$.  If $G$ is an oriented graph on $n$ vertices and
\[
  \delta^0(G)\ge \left\lceil\frac n3\right\rceil,
\]
then every vertex of $G$ lies on a copy of $C_{3q}$.
\end{theorem}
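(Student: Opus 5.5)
Since the theorem of Kelly, K\"uhn and Osthus already covers $\delta^0(G)\ge\lfloor n/3\rfloor+1$, and the abstract's path lemma gives an explicit order bound, the only new case is $n=3m$ and $\delta^0(G)=m$. Fix a vertex $v$ and suppose, for contradiction, that $v$ lies on no $C_{3q}$. The strategy is a stability argument: show that $G$ must be close to the extremal cyclic blow-up with parts $V_0\to V_1\to V_2\to V_0$, and then exploit the exact degree count to reach a contradiction.

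\emph{Step 1: short connecting paths.} I will use the path lemma from the abstract. If $H$ has order $N$, $\delta^0(H)=d\ge3$ and $7d\ge2N+3$, then every ordered pair of distinct vertices is joined by a path of length $3$, $4$ or $5$. Take $x\in N^+(v)$ and $y\in N^-(v)$, and apply the lemma in $G-v$, or in $G$ with a few vertices deleted. Here $d\approx m$ and $N\approx3m$, so $7d\ge2N+3$ fails. Hence the lemma must instead be applied inside a dense substructure, such as $G[N^+(v)\cup N^-(v)\cup\cdots]$, or to an auxiliary ``square'' digraph. The cleanest route is to build a path $P$ of length $3q-2-k$ from $x$ to $y$ avoiding $v$, where the final segment of length $k\in\{3,4,5\}$ comes from the lemma and the earlier segment is grown greedily using semidegree $m$. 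The flexibility in $k$ modulo $3$ is the key point: in the extremal blow-up every $x$--$y$ path has length fixed modulo $3$, and this is exactly the obstruction. Having three consecutive lengths available is what allows us to hit the residue of $3q-2$.

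\emph{Step 2: structure when the lemma cannot be applied.} When no such dense host exists, I will analyse $A=N^+(v)$ and $B=N^-(v)$, each of size at least $m$, together with $C=V\setminus(A\cup B\cup\{v\})$. Since $v$ lies on no $C_{3q}$, no path of length $3q-2$ goes from $A$ to $B$. A counting argument then shows that almost all edges must follow $A\to C\to B\to A$ and that few edges lie inside any one class. In particular, if $G[A]$ contains an edge, or if there is an edge $A\to B$ or $C\to A$, we obtain a path whose length has a shifted residue, and we can pad it greedily to length $3q-2$. The padding uses two facts: $\delta^0=m$ gives many out-neighbours in the next class, and with $n\ge45q-8$ the $O(q)$ vertices already used can always be avoided. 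The constant $45q-8$ is where this bookkeeping bites.

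\emph{Step 3: exactness.} In a near-perfect tripartite cyclic structure with $|A|,|B|\ge m$ and $|A|+|B|+|C|=3m-1$, some class has size below $m$. Consider a vertex $u$ in the class preceding it, for instance $u\in B$, whose out-neighbours should lie in $A$. Alternatively, consider vertices of $C$, whose $m$ out-neighbours cannot all fit into a class that is too small. A pigeonhole argument then forces either a backward edge or an edge inside a class. This is exactly where $\lceil n/3\rceil$ rather than $\lfloor(n-1)/3\rfloor$ matters: the sharpness example has $n=3m+1$, and the spare vertex there is what allows exact regularity. Such an edge yields a residue shift, which Step 2 converts into a $C_{3q}$ through $v$.

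The main obstacle I expect is Step 2: establishing the robust tripartite structure, with explicit, non-asymptotic constants, from the mere absence of a $(3q-2)$-path between $A$ and $B$. My first approach would be to apply the path lemma to $G[A\cup C\cup B]$ with a modified degree count, or to the bipartite-like auxiliary digraph, in order to handle the dense case. For small $q$ (the values $37$ and $97$) I would check separately that the greedy padding still succeeds.
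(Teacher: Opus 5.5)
Your Step~1 misreads the linking lemma in two ways, and the second is fatal.

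First, the inequality does \emph{not} fail at this scale. With $d=m$ and $N=3m$ one has $7m\ge 6m+3$ once $m\ge3$. After deleting $r$ vertices, $7(m-r)\ge 2(3m-r)+3$ holds whenever $m\ge 5r+3$. This is exactly how the paper uses the lemma (Lemma~\ref{lem:extension-deletion}(ii)), so no ``dense host'' or auxiliary digraph is needed.

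Second, the lemma returns \emph{some} length $t\in\{3,4,5\}$ that you do not control. It is even satisfied by the sharpness example when $n\ge30$, where every return path from $N^+(u)$ to $u$ has length $\equiv0\pmod 3$. So the lemma can never supply the change of residue by itself. Your construction, an initial segment of length $3q-2-k$ followed by a linking segment of length $k$, is circular: $k$ is determined only after the endpoint of the initial segment is fixed. The flexibility has to sit on the other side. You need three paths of consecutive lengths between the \emph{same} two vertices, then a greedy extension of fixed length, then the link, and only at the end do you choose which of the three paths to use. In the paper this gadget is the $xy$-butterfly $xh,xz,hz,zb,zy,by$, whose $x$--$y$ paths have lengths $2,3,4$ (Lemmas~\ref{lem:extension-deletion}(iii) and~\ref{lem:butterfly-long}). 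The same gadget is what gives the explicit order bound that your opening reduction takes for granted. With the butterfly this only works for $\ell\ge7$, so for $q=2$ there is nothing to ``pad''. There, $C_6$ needs separate direct arguments (Lemma~\ref{lem:butterfly-C6} and Proposition~\ref{prop:first-C6}).

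Steps~2--3 are where the theorem actually lives, and there you give no argument. Moreover, the claim that one stray arc ``yields a residue shift which greedy padding converts into a $C_{3q}$'' fails for the reason above. An arc $h\to z$ inside $N^+(v)$ gives only the two $v$--$z$ lengths $1$ and $2$, and greedy padding cannot return to $v$ with controlled length.

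The missing idea is the paper's trichotomy. If $N^+(v)$ is independent, exact counting at $n=3m$, $\delta^0=m$ forces a balanced cut: $A=N^+(v)$ is independent of size exactly $m$, and $d_B^+(a)=d_B^-(a)=m$ for every $a\in A$ (Lemma~\ref{lem:balanced-cut}). If $N^+(v)$ contains an arc $h\to z$ but $N^+(z)$ is independent, the same holds with $A=N^+(z)$. Otherwise an arc $b\to y$ in $N^+(z)$ completes a butterfly.

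A balanced cut constrains $G[B]$ only through $d_B^+(b)\ge s_b$ and $d_B^-(b)\ge t_b$. Proving that almost all arcs follow $A\to C\to B\to A$ is therefore not something counting gives you; it is essentially the whole theorem. The $C_{3q}$ has to be built inside the cut. In the root-dominating cut the paper does this via the rows $O(a)=N_B^+(a)$:
\begin{itemize}
\item all but at most three $a\in A$ admit a transition $a'\to b\to c\to a$;
\item the row family either contains a terminal-safe three-chain (three consecutive lengths, closed by short linking), or has a dominant row type, or varies on at most three points;
\item in the last two cases, $\sum_b s_b=m^2$ yields a dense bipartite graph, and a K\"onig matching of size $q-1$ threads a $C_{3q}$ directly (Proposition~\ref{prop:first-port}).
\end{itemize}
In the transitive-entry cut, a shorter split plays the same role: an arc $x\to A$, two distinct rows, or all rows equal (Proposition~\ref{prop:second-port}). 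Nothing in your Steps~2--3 reaches this structure.
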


\begin{corollary}\label{cor:threshold}
For every $q\ge2$ and $n\ge n_0(q)$,
\[
  \tau^v_{3q}(n)=\left\lceil\frac n3\right\rceil.
\]
\end{corollary}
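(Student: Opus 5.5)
The corollary follows by combining Theorem~\ref{thm:main} with the lower-bound construction recalled before it. We will check the two inequalities separately.

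For the upper bound $\tau^v_{3q}(n)\le\lceil n/3\rceil$, there is nothing beyond Theorem~\ref{thm:main} to prove. Take any $n$-vertex oriented graph $G$ with $n\ge n_0(q)$ and $\delta^0(G)\ge\lceil n/3\rceil$. By the theorem, every vertex of $G$ lies on a copy of $C_{3q}$, so $d=\lceil n/3\rceil$ is admissible in the definition of $\tau^v_{3q}(n)$. Note that this bound uses only $n\ge n_0(q)$ and not the much larger range $n\ge10^{10}\ell$ from \cite{KKO}.

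For the lower bound $\tau^v_{3q}(n)\ge\lceil n/3\rceil$, we need, for every $n\ge n_0(q)$, an $n$-vertex oriented graph $G$ with $\delta^0(G)\ge\lceil n/3\rceil-1$ and some vertex lying on no $C_{3q}$. We use the modified cyclic blow-up of \cite{KKO}. It has $\delta^0(G)=\lfloor (n-1)/3\rfloor$ and a prescribed vertex on no $C_\ell$ whenever $3\mid\ell$; here $\ell=3q$.

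It remains to verify the identity $\lfloor (n-1)/3\rfloor=\lceil n/3\rceil-1$ for every integer $n$. Writing $n=3m+r$ with $r\in\{0,1,2\}$:
\begin{itemize}
\item for $r=0$, both sides equal $m-1$;
\item for $r=1$ and $r=2$, both sides equal $m$.
\end{itemize}
So the threshold is at least $\lceil n/3\rceil$.

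The one point needing care is that \cite{KKO} states the construction alongside the hypothesis $n\ge10^{10}\ell$, whereas we need it for all $n\ge n_0(q)$. That hypothesis belongs to their upper-bound theorem, not to the construction. The construction itself only requires splitting the $n$ vertices into three near-equal classes, orienting them cyclically as a blow-up of $C_3$, and adjusting the prescribed vertex so that its out-neighbours and in-neighbours lie in classes that make every closed walk through it have length $\not\equiv 0\pmod 3$. The main step is therefore to re-check directly that the semidegree count $\lfloor (n-1)/3\rfloor$ and the cycle-length obstruction hold for every $n\ge 3$; this is elementary counting in each residue class of $n$ modulo $3$.

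With both bounds in place, $\tau^v_{3q}(n)=\lceil n/3\rceil$.
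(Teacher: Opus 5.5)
Your proposal is correct and follows the paper's argument. The upper bound is Theorem~\ref{thm:main}, and the lower bound is the paper's explicit re-check of the Kelly--K\"uhn--Osthus construction: three near-equal independent classes on $n-1$ vertices, plus a vertex $u$ with $V_1\to u\to V_2$. Its degree table gives $\delta^0=\lfloor (n-1)/3\rfloor=\lceil n/3\rceil-1$, and every \emph{cycle} (not every closed walk, since walks revisiting $u$ three times have length divisible by $3$) through $u$ has length $\equiv 1\pmod 3$.
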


For $3\mid n$ and $n\ge n_0(q)$, this replaces the upper bound
$m+1$ in~\eqref{eq:old-window} by $m$.

The order bound in Theorem~\ref{thm:main} follows from a three-length
linking lemma.  With $\ell=3q$, the hypothesis $n\ge n_0(q)$ is
$n\ge15\ell-8$ for $q\ge4$, in place of $n\ge10^{10}\ell$.
If an oriented graph $H$ has order $N$ and minimum semidegree $d\ge3$,
then
\[
  7d\ge2N+3
\]
guarantees, between every ordered pair of distinct vertices, a path of
length three, four, or five.  We give examples with
$7d=2N+2$, so the additive constant is sharp.  In the defect
parameterization $d\ge N/3-C+1$, where $C$ is a positive integer, the
lemma already applies when
$N\ge21C-12$. The coefficient $21$ is asymptotically best possible.

Jackson~\cite{Jackson} gave classical semidegree conditions for long
directed paths and cycles.  Darbinyan and Karapetyan~\cite{DK} studied
short paths, including versions with forbidden vertices.  Their result
bounds directed distance but does not guarantee a path whose length
belongs to $\{3,4,5\}$.  Zhou and Yan~\cite{ZY} proved an $H$-linkage
theorem at the $3n/8$ scale for fixed $H$, sufficiently large order,
and prescribed subdivision-path lengths at least four.  Its semidegree
condition is above the one-third scale considered here.

The same short-linking lemma improves the large-order hypothesis in the
full prescribed-vertex theorem of Kelly, K\"uhn and Osthus, without the
restriction $3\mid\ell$.  We prove below that, for every $\ell\ge7$,
the condition
\[
  n\ge15\ell-60,
  \qquad
  \delta^0(G)\ge\left\lfloor\frac n3\right\rfloor+1
\]
forces a copy of $C_\ell$ through every prescribed vertex.  Their direct
arguments for $\ell=4,5,6$ already require only $n\ge\ell$.  Thus the
same theorem has an explicit linear order bound for every length.
To the best of our knowledge, no previous result improves the explicit
condition $n\ge10^{10}\ell$ while retaining both the prescribed vertex
and the assumption
$\delta^0(G)\ge\lfloor n/3\rfloor+1$.
For $\ell=3q$ and $n=3m$, this corollary still assumes
$\delta^0(G)\ge m+1$.  The equality analysis lowers this to $m$; the
deletion budgets for $q=2$, $q=3$, and $q\ge4$ give the three values
in~\eqref{eq:n0-def}.  If $q\ge3$ and $3\nmid n$,
Corollary~\ref{cor:nonzero-residue} gives the stronger order bound
$n\ge45q-60$.

Without prescribing a vertex, Czygrinow, Molla, Nagle and
Oursler~\cite{CMNO} proved that, for each fixed $\ell\ge4$ and all
sufficiently large $n$, the one-sided condition
\[
  \delta^+(G)\ge\frac{n+1}{3}
\]
forces a copy of $C_\ell$.  Their theorem does not give a rooted
conclusion or an explicit order bound.

The $3n/8$ Hamiltonian threshold was developed
in~\cite{Hag,KKOdirac} and determined exactly for large order
in~\cite{KKOexact}.  At this denser scale, Kelly, K\"uhn and
Osthus~\cite{KKO} proved that every prescribed vertex lies
on a directed cycle of every length $4\le \ell\le n$, while Wang,
Wang and Zhang~\cite{WWZ} treat arbitrary
orientations without a prescribed vertex.  These results do not apply
at semidegree $n/3$.

The restriction $q\ge2$ is essential.  A directed triangle through a
prescribed vertex has a different threshold: Kelly, K\"uhn and
Osthus~\cite{KKO} showed that its asymptotic scale is
$2n/5$, rather than $n/3$.  Thus $C_3$ does not belong to the phenomenon
studied here.

The rooted problem is different from ordinary, unrooted containment.
Kelly, K\"uhn and Osthus~\cite[Conjecture~5]{KKO} conjectured that, if
$\ell\ge4$ and $k>2$ is the smallest integer that does not divide
$\ell$, then, for all sufficiently large $n$, the condition
\[
  \delta^0(G)\ge \left\lfloor\frac nk\right\rfloor+1
\]
forces a copy of $C_\ell$ in every oriented graph $G$ on $n$ vertices.
The conjectured threshold is suggested by cyclic blow-ups.
For $k\ge7$ and $\ell\ge10^7k^6$, K\"uhn, Osthus and
Piguet~\cite{KOP} proved the corresponding asymptotic
result: for every $\eta>0$, the condition
$\delta^0(G)\ge(1+\eta)n/k$ suffices for all sufficiently large $n$.
Grzesik and Volec~\cite[Theorem~1.4]{GV} later determined the corrected
exact thresholds for this unrooted problem.  The conjectured bound is
exact when $3\nmid\ell$ or $\ell\equiv3\pmod{12}$; the other congruence
classes require the rounding corrections stated in their theorem.
In particular, when $3\mid\ell$ and $\ell\ne3$, one has $k\ge4$, so
the unrooted threshold is at most $n/4+O(1)$.
Requiring the cycle to pass through an arbitrary vertex restores the
one-third barrier, as the lower construction
behind~\eqref{eq:old-window} shows.  Related prescribed-vertex results
under the additional assumption that the oriented graph contains no
directed triangle were obtained by Ji, Wu and Song~\cite[Theorem~1.4]{JWS}.

Only the case $n=3m$ requires an additional argument.  Fix the
prescribed vertex $x$.  If $N^+(x)$ is independent,
Lemma~\ref{lem:balanced-cut} gives the root-dominating balanced cut.
Otherwise choose an arc $h\to z$ in $G[N^+(x)]$.  If $N^+(z)$ is
independent, the same lemma gives the transitive-entry balanced cut; if
not, an arc in $G[N^+(z)]$ produces an $xy$-butterfly.  In the two
balanced-cut cases, the row-family classification and matching
arguments are used in Propositions~\ref{prop:first-port}
and~\ref{prop:second-port}.  The butterfly case is completed by
Lemmas~\ref{lem:butterfly-C6} and~\ref{lem:butterfly-long}.  When
$3\nmid n$, every outneighbourhood is non-independent at the threshold
$\delta^0(G)\ge\lceil n/3\rceil$, so only the butterfly case is needed.
Sections~\ref{sec:prelim}--\ref{sec:main-proof} follow this order.

\section{Short linking and preliminary reductions}\label{sec:prelim}

For an oriented graph $G$, write $V(G)$ for its vertex set and
$|G|=|V(G)|$ for its order.  For $v\in V(G)$, let $N_G^+(v)$ and
$N_G^-(v)$ be its out- and inneighbourhoods, and put
$d_G^\pm(v)=|N_G^\pm(v)|$.  We omit the subscript $G$ when the ambient
graph is clear.  For $X\subseteq V(G)$, define
\[
  N^\pm(X)=\bigcup_{x\in X}N^\pm(x).
\]
Thus the external neighbourhood of $X$ is $N^\pm(X)\setminus X$.
For $Y\subseteq V(G)$, set
\[
  N_Y^\pm(v)=N^\pm(v)\cap Y,\qquad
  d_Y^\pm(v)=|N_Y^\pm(v)|.
\]
We write $G[X]$ for the subgraph induced by $X$, $e(X)$ for its number
of arcs, and, for disjoint sets $X,Y$, $e(X,Y)$ for the number of arcs
directed from $X$ to $Y$.  A vertex string $v_0v_1\cdots v_k$ denotes
the directed path
\[
  v_0\to v_1\to\cdots\to v_k.
\]
We use the following two elementary facts repeatedly.

\begin{lemma}\label{lem:elementary}
Let $G$ be an oriented graph.
\begin{enumerate}
\item Every nonempty $X\subseteq V(G)$ satisfies $e(X)\le\binom{|X|}{2}$.  Consequently,
\[
  |N^+(X)\setminus X|\ge \delta^+(G)-\frac{|X|-1}{2},
\]
and the analogous reverse inequality holds for $N^-(X)\setminus X$.
\item Every independent set has size at most $|G|-2\delta^0(G)$.
\end{enumerate}
\end{lemma}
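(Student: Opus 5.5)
Both parts come down to a double count of arcs inside a vertex set, and (ii) will follow from (i) with $X$ taken to be the independent set.

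For (i), I first observe that an oriented graph has at most one arc between any two vertices. Hence $G[X]$ has at most $\binom{|X|}{2}$ arcs, which is the first claim. For the neighbourhood bound, I count arcs leaving vertices of $X$. Since $|X|\ge1$, we have
\[
  \sum_{x\in X}d^+(x)\ge |X|\,\delta^+(G).
\]
Each such arc either stays inside $X$, contributing to $e(X)$, or ends in $N^+(X)\setminus X$. The arcs of the second kind satisfy
\[
  e\bigl(X,N^+(X)\setminus X\bigr)\le |X|\,|N^+(X)\setminus X|,
\]
because each vertex of $X$ sends at most one arc to each outside vertex. Combining the two counts gives
\[
  |X|\,\delta^+(G)\le \binom{|X|}{2}+|X|\,|N^+(X)\setminus X|.
\]
Dividing by $|X|$ yields $|N^+(X)\setminus X|\ge\delta^+(G)-(|X|-1)/2$. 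Reversing all arcs gives the same statement for $N^-(X)\setminus X$ with $\delta^-(G)$ in place of $\delta^+(G)$.

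For (ii), I let $I$ be an independent set and double count the arcs between $I$ and $V(G)\setminus I$. Every out-arc and every in-arc at a vertex of $I$ has its other end outside $I$, so there are at least $2|I|\delta^0(G)$ such arcs. On the other side, each pair $(u,w)$ with $u\in I$ and $w\notin I$ carries at most one arc, since $G$ is oriented. This gives
\[
  2|I|\,\delta^0(G)\le |I|\,(|G|-|I|).
\]
If $I=\emptyset$ the bound is trivial, as long as $|G|\ge2\delta^0(G)$, which itself follows from $\delta^0(G)\le (|G|-1)/2$ in any oriented graph. Otherwise I divide by $|I|$ to get $|I|\le|G|-2\delta^0(G)$.

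No step here is a real obstacle. The only point that needs care is to use the single-arc-per-pair property of oriented graphs, not merely the ordered pair counts, in both bounds. This is exactly what produces the factor $2$ in (ii) and the $\binom{|X|}{2}$ in (i).
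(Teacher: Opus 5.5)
Your proof is correct and is essentially the paper's argument in averaged form: where you sum over all vertices of $X$ in (i) and of $I$ in (ii), the paper instead picks one vertex of $G[X]$ with internal outdegree at most $(|X|-1)/2$, and one vertex $v$ of the independent set, whose disjoint sets $N^+(v)$ and $N^-(v)$ lie outside it. Your (ii) is a direct double count rather than a consequence of (i) as your opening sentence suggests, and your remark on $I=\varnothing$ covers an edge case the paper leaves implicit.
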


\begin{proof}
The first assertion follows because an oriented graph has at most one arc on each unordered pair.  Choose a vertex of $G[X]$ with outdegree at most $(|X|-1)/2$ to obtain the displayed inequality; reverse all arcs for its counterpart.  If $X$ is independent, then for any $x\in X$ the sets $N^+(x)$ and $N^-(x)$ are disjoint subsets of $V(G)\setminus X$, giving $|G|-|X|\ge2\delta^0(G)$.
\end{proof}

Kelly, K\"uhn and Osthus~\cite{KKO} proved that, for a
positive integer $C$, under the hypotheses
\[
  \delta^0(H)\ge N/3-C+1
  \qquad\text{and}\qquad
  N\ge8\cdot10^9C,
\]
every ordered pair of distinct vertices is joined by a path whose
length belongs to $\{3,4,5\}$.  The next lemma replaces their
large-order hypothesis by a sharp linear inequality.

\begin{lemma}\label{lem:short-link}
Let $H$ be an oriented graph of order $N$, and put
$d=\delta^0(H)$.  If $d\ge3$ and
\[
  7d\ge2N+3,
\]
then every ordered pair of distinct vertices $x,y$ is joined by an
$x$--$y$ path of length $3$, $4$, or $5$.
\end{lemma}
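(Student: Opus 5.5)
We argue by contradiction. Fix $x\ne y$ and suppose that no $x$--$y$ path has length $3$, $4$, or $5$. The plan is to grow out-layers from $x$ and in-layers from $y$ and use Lemma~\ref{lem:elementary}(i) to show these layers are too large to be disjoint. Let $A=N^+(x)\setminus\{y\}$ and $B=N^-(y)\setminus\{x\}$, so $|A|,|B|\ge d-1$. Next let $A_2=N^+(A)\setminus(A\cup\{x,y\})$ and $B_2=N^-(B)\setminus(B\cup\{x,y\})$. By Lemma~\ref{lem:elementary}(i) applied to $A$,
\[
  |A_2|\ge d-\frac{|A|-1}{2}-2,
\]
and similarly for $B_2$.

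The key observation is the list of forbidden adjacencies. We need paths, not walks, so whenever we concatenate $x\to a\to\cdots\to b\to y$ we must check that the intermediate vertices are distinct; choosing the layers disjoint from each other and from $\{x,y\}$ guarantees this.
\begin{itemize}
\item A vertex of $A$ joined by an arc to a vertex of $B$ gives a path of length $3$. Hence $e(A,B)=0$, apart from the degenerate case $A\cap B\ne\emptyset$, which yields only length $2$ and must be handled separately.
\item A vertex of $A_2$ lying in $B$ gives length $4$.
\item An arc from $A_2$ to $B_2$ gives length $5$, and so does $A_2\cap B_2\ne\emptyset$ with suitable bookkeeping.
\end{itemize}
So I would first prove that the sets $A\cup A_2$ and $B\cup B_2$ are essentially disjoint and that $A_2$ sends no arcs into $B\cup B_2$, while $A$ sends no arcs into $B$ (or into $B_2$ when the resulting path has length $4$).

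Counting then proceeds as follows. Each $a\in A$ has all its out-neighbours in $A\cup A_2\cup\{x,y\}$, minus the forbidden ones. A vertex $u\in A_2$ has out-neighbours avoiding $B\cup B_2$. Symmetrically, in-neighbours of $B\cup B_2$ avoid $A\cup A_2$. Write $S=A\cup A_2$ and $T=B\cup B_2$. The vertices of $S$ have out-degree at least $d$, yet the out-neighbours of $A_2$ miss $T$. Taking a vertex of minimal out-degree inside $G[A_2]$ and applying Lemma~\ref{lem:elementary}(i) gives
\[
  N\ge |A|+|A_2|+|B|+|B_2|+|\text{outside}|+2,
\]
where ``outside'' contains at least $d-(|A_2|-1)/2-|A|$ further vertices. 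Combining this with $|A|\ge d-1$ and $|A_2|\ge d-(|A|+3)/2$, and optimizing over $|A|$ and $|A_2|$ (the bound is linear in each, so the extremes govern), should give $N\ge (7d-c)/2$ for a small constant $c$. This contradicts $7d\ge 2N+3$. The coefficient $7/2$ should come from three layers of size about $d$ on each side, with the halving loss from Lemma~\ref{lem:elementary}(i) at the middle layers. I would calibrate the constant so that the extremal examples with $7d=2N+2$ mentioned in the introduction are exactly tight. The hypothesis $d\ge3$ is there to handle the small-case degeneracies, for instance when $A$ or $B$ is tiny after removing $x$ and $y$.

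The main obstacle will be getting the optimization sharp to the additive constant $+3$. A naive count loses $O(1)$ from the vertices $x$ and $y$, from the arc between $x$ and $y$ if it exists, and from overlaps such as $y\in N^+(x)$. I would handle this by splitting into cases according to whether $xy$ or $yx$ is an arc and whether $A\cap B\ne\emptyset$. In the tight regime I would also use the fact that an oriented graph on $k$ vertices with all out-degrees at least $(k-1)/2$ is a regular tournament. That forces near-equality structure in $A_2$ and $B_2$, which I expect to rule out the last unit.
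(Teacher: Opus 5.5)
There is a genuine gap, and it sits in two places: the overlap of the first layers, and the counting.

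\textbf{The overlap.} The set $N^+(x)\cap N^-(y)$ is not a degenerate case to be handled separately; it is the extremal configuration. In the graph $H_c$ of Proposition~\ref{prop:link-sharp}, $N^+(x)\setminus\{y\}=N^-(y)\setminus\{x\}=S$. So your layers satisfy $A=B$, and any count of the form $N\ge|A|+|A_2|+|B|+|B_2|+\cdots$ double counts a set of size $d-1$.

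The overlap also breaks one of your exclusions. An arc $u\to v$ with $u\in A_2$ and $v\in B_2$ gives the walk $x\to a\to u\to v\to b\to y$. If the only available $a$ and $b$ coincide, that is, a vertex of $A\cap B$ lying on a triangle $a\to u\to v\to a$, this walk is not a path. (Two length errors as well: $A_2\cap B\neq\varnothing$ gives a path of length $3$, and $A_2\cap B_2\neq\varnothing$ gives one of length $4$, not $4$ and $5$.)

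The paper avoids all of this as follows. It chooses $(d-2)$-subsets $X\subseteq N^+(x)\setminus\{y\}$ and $Y\subseteq N^-(y)\setminus\{x\}$ with $X\ne Y$. It then grows the second layers only out of $A=X\setminus Y$ and into $B=Y\setminus X$. These two sets are disjoint and have the same size $a\ge1$. Moreover, $A$ sends no arc into $X\cap Y$, so the out-arcs of $A$ stay in $A\cup P\cup\{y\}$. In $H_c$ this gives $a=1$ and second layers of size about $d$, not $d/2$.

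\textbf{The counting.} Your count cannot reach $7d/2$ even when $A\cap B=\varnothing$. Since $|A|\ge d-1$, your lower bound $d-(|A_2|-1)/2-|A|$ for the outside part is at most $1$. Combined with $|A_2|\ge d-(|A|+3)/2$, your inequality then yields only $N\ge3d-O(1)$, which is compatible with $2N+3\le7d$. The problem is that a single minimum-outdegree vertex of $G[A_2]$ may send all its arcs back into $A$.

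The missing idea is to sum out-degrees over the whole second layer $P$ and use orientedness between $A$ and $P$:
\begin{itemize}
\item $e(A,P)\ge a\bigl(d-(a+1)/2\bigr)=:aL$ forces $e(P,A)\le ap-aL$;
\item together with $e(P)\le\binom p2$, $|Q|\ge L$, and $P\nrightarrow Y\cup Q\cup\{y\}$, this gives $0\ge p^2/2-p(a/2+c)+aL$ with $c=N-3d+1$.
\end{itemize}
This bound is quadratic in $p$, not linear, so ``the extremes govern'' is false. For $p\ge L$ its minimum is either at $p=L$ or at the interior point $p=a/2+c$. The latter yields a concave function $g(a)$ whose positivity on $d-c\le a\le d-2$ is checked at the two endpoints. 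This is exactly where $7d\ge2N+3$ (equivalently $d\ge2c+1$) is used, and where the cap $a\le d-2$, coming from taking $(d-2)$-subsets rather than full neighbourhoods, is essential. The regular-tournament rigidity you planned for the last unit is not needed; the constant $+3$ comes out of this endpoint arithmetic.
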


\begin{proof}
The proof idea has three steps.  We first choose equal-sized sets in the
outneighbourhood of the initial vertex and the inneighbourhood of the
terminal vertex; the absence of paths of lengths $3$, $4$, and $5$
then forces a rigid system of forbidden arcs between the resulting
layers.  Minimum semidegree makes two intermediate layers large, while
the forbidden arcs give an incompatible upper bound on the total
outdegree of one of them.  The contradiction reduces to a quadratic
inequality.  Its two endpoint estimates use precisely the relation
$7d\ge2N+3$.

\smallskip
\noindent\emph{Step 1: the forbidden-arc structure.}
Put $s=d-2$.
The sets $N^+(x)\setminus\{y\}$ and $N^-(y)\setminus\{x\}$ both have
size at least $s+1$.  Choose an $s$-set
$X\subseteq N^+(x)\setminus\{y\}$.  Since
$|N^-(y)\setminus\{x\}|\ge s+1>|X|$, there is an $s$-set
$Y\subseteq N^-(y)\setminus\{x\}$ with $Y\ne X$.  Set
\[
  Z=X\cap Y,\qquad A=X\setminus Y,\qquad B=Y\setminus X,
  \qquad a=|A|=|B|\ge1.
\]
Suppose, for a contradiction, that there is no $x$--$y$ path of
length $3$, $4$, or $5$.  Then there is no arc from $X$ to $Y$.
Define
\[
  P=N^+(A)\setminus(X\cup\{x,y\}),\qquad
  Q=N^-(B)\setminus(Y\cup\{x,y\}),
\]
and put $p=|P|$ and $u=|Q|$.  The sets
$X\cup Y$, $P$, $Q$, and $\{x,y\}$ are pairwise disjoint.  Indeed,
$P\cap Y\ne\varnothing$ or $Q\cap X\ne\varnothing$ would give an
$x$--$y$ path of length three, while $P\cap Q\ne\varnothing$ would
give one of length four.  We also have
\[
  P\nrightarrow Y,\qquad P\nrightarrow Q,
  \qquad P\nrightarrow y;
\]
otherwise there is an $x$--$y$ path of length four, five, or three,
respectively.

\smallskip
\noindent\emph{Step 2: degree counting.}
Let
\[
  R=V(H)\setminus\bigl((X\cup Y)\cup P\cup Q\cup\{x,y\}\bigr),
  \qquad \rho=|R|=N-d-a-p-u,
\]
and define
\[
  L=d-\frac{a+1}{2}.
\]
Since $a\le s=d-2$, we have $L\ge(d+1)/2>0$.  No arc goes from $A$ to
$Y$, and $A\nrightarrow x$ because $x\to A$.  Hence every out-arc
of $A$ goes inside $A$, to $P$, or possibly to $y$, and therefore
\[
  e(A,P)\ge ad-\binom a2-a=aL.
\]
The reverse argument, applied to the indegrees of $B$, gives
$e(Q,B)\ge aL$.  Consequently,
\[
  p,u\ge L.
\]

All out-arcs of $P$ go to $A$, inside $P$, to $R$, or possibly to
$x$.  Orientedness between $A$ and $P$ gives
$e(P,A)\le ap-e(A,P)\le ap-aL$.  Summing the outdegrees of the vertices of
$P$, we obtain
\begin{align*}
  pd
  &\le e(P,A)+e(P)+e(P,R)+e(P,\{x\})\\
  &\le ap-aL+\binom p2+p\rho+p.
\end{align*}
Thus
\[
  0\ge F:=p\left(d-a-\rho-\frac{p+1}{2}\right)+aL.
\]
Substituting the value of $\rho$ and using $u\ge L$ gives
\[
  F\ge \frac{p^2}{2}
       -p\left(\frac a2+c\right)+aL
   =:h(p,a),
  \qquad c:=N-3d+1.
\]
The hypothesis $7d\ge2N+3$ is equivalent to
\begin{equation}\label{eq:d-c-relation}
  d\ge2c+1.
\end{equation}

\smallskip
\noindent\emph{Step 3: the quadratic contradiction.}
Write $\kappa=a/2+c$.  If $L\ge\kappa$, then $h$ is increasing in
$p$ for $p\ge L$, and
\[
  h(p,a)\ge h(L,a)
  =L\left(\frac{L+a}{2}-c\right)>0.
\]
Indeed, $L+a=d+(a-1)/2\ge d$, so the last factor is at least
$d/2-c\ge1/2$ by~\eqref{eq:d-c-relation}.

It remains that $L<\kappa$.  For fixed $a$, the convex quadratic
$h(p,a)$ is minimized at $p=\kappa$, so
\[
  h(p,a)\ge
  g(a):=a\left(d-\frac{a+1}{2}\right)
  -\frac12\left(\frac a2+c\right)^2.
\]
Since $a$ is an integer, $L<\kappa$ is equivalent to
$a\ge d-c$.  Together with $a\le d-2$, this implies $c\ge2$.
The function $g$ is concave in $a$, so it suffices to check the two
endpoints of
\[
  d-c\le a\le d-2.
\]
Write $t=d-2c-1\ge0$.  Direct expansion gives
\begin{align*}
  8g(d-c)
  &=3c^2+6c-1+(10c+2)t+3t^2>0,\\
  8g(d-2)
  &=16c-9+(8c+6)t+3t^2>0.
\end{align*}
Thus $g(a)>0$ throughout the interval, again contradicting $F\le0$.
\end{proof}

Substituting $d\ge N/3-C+1$ in Lemma~\ref{lem:short-link} gives the
following form.

\begin{corollary}\label{cor:defect-link}
Let $C$ be a positive integer.  If $H$ is an oriented graph on
$N\ge21C-12$ vertices with
\[
  \delta^0(H)\ge\frac N3-C+1,
\]
then every ordered pair of distinct vertices is joined by a path of
length $3$, $4$, or $5$.
\end{corollary}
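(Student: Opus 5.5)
We derive the corollary directly from Lemma~\ref{lem:short-link} by checking its two hypotheses, $d\ge3$ and $7d\ge2N+3$, where $d=\delta^0(H)$. Since $d$ is an integer and $d\ge N/3-C+1$, we have $d\ge\lceil N/3\rceil-C+1$. The plan is to show that the assumption $N\ge21C-12$ makes this lower bound at least $(2N+3)/7$ and at least $3$.

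\emph{The main inequality.} It suffices to prove $7(N/3-C+1)\ge 2N+3$ up to the rounding slack. Expanding, this is
\[
  \frac{7N}{3}-7C+7\ge2N+3\iff \frac N3\ge 7C-4\iff N\ge21C-12 .
\]
This is exactly the hypothesis, so $7d\ge 7(N/3-C+1)\ge2N+3$ holds without any appeal to integrality. This also explains why the threshold $21C-12$ appears in the statement.

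\emph{The condition $d\ge3$.} From $N\ge21C-12$ we get $N/3\ge7C-4$, and hence
\[
  d\ge 7C-4-C+1=6C-3\ge3
\]
because $C\ge1$. Alternatively, $7d\ge2N+3$ together with $N\ge 2d+1$ (any oriented graph has $N\ge2\delta^0+1$) gives $7d\ge4d+5$, so $d\ge 5/3$ and thus $d\ge2$. That route is weaker, so we use the direct bound $6C-3\ge3$.

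With both hypotheses verified, Lemma~\ref{lem:short-link} applies to every ordered pair of distinct vertices and gives a path of length $3$, $4$, or $5$. There is no real obstacle here. The only point needing care is that the $d\ge3$ condition is not automatic from $7d\ge2N+3$ alone, and it must be extracted from $C\ge1$ and $N\ge21C-12$ as above.
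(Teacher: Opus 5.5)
Your proposal is correct and follows the paper's proof: it checks $d\ge N/3-C+1\ge 6C-3\ge3$ and $7d\ge7(N/3-C+1)\ge2N+3$, the latter being equivalent to $N\ge21C-12$, and then applies Lemma~\ref{lem:short-link}. Your side remarks, on integrality and on the weaker bound obtained from $N\ge2d+1$, are accurate but unnecessary.
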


\begin{proof}
Put $d=\delta^0(H)$.  The hypotheses give
\[
  d\ge\frac N3-C+1\ge6C-3\ge3
\]
and
\[
  7d\ge7\left(\frac N3-C+1\right)\ge2N+3.
\]
Apply Lemma~\ref{lem:short-link}.
\end{proof}

The following construction shows that $+3$ cannot be reduced and that
the coefficient $21$ is asymptotically best possible.

\begin{proposition}\label{prop:link-sharp}
For every integer $c\ge2$, there is an oriented graph $H_c$ of order
$N=7c-1$ with $\delta^0(H_c)=2c$ and vertices $x,y$ such that no
$x$--$y$ path has length $3$, $4$, or $5$.  Thus the constant $+3$
in Lemma~\ref{lem:short-link} cannot be replaced by $+2$.  Moreover,
the coefficient $21$ in Corollary~\ref{cor:defect-link} is
asymptotically best possible.
\end{proposition}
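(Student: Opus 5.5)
The plan is to build $H_c$ by modelling it on the extremal configuration of the proof of Lemma~\ref{lem:short-link}, and then check the three length exclusions by a layer/distance argument. With $N=7c-1$ and $d=2c$, the parameter of that proof is $N-3d+1=c$, and $7d=14c=2N+2$. The tight case of Step~3 is the endpoint $a=d-c=c$ with $t=0$ (up to the one-unit slack). So I would take vertex classes
\[
  \{x\},\ \{y\},\ Z,\ A,\ B,\ P,\ Q,\ R,
\]
with $|A|=|B|=c$ and $|Z|$ about $c$. The sizes $p,u,\rho$ would be chosen near the extremal values $p\approx u\approx L$, subject to $p+u+\rho=N-2-|Z|-2c$. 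The remaining one-unit adjustments would be made so that the total is exactly $7c-1$.

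The arcs would follow the forbidden-arc pattern of Step~1:
\begin{itemize}
\item $x\to Z\cup A$ and $Z\cup B\to y$;
\item $A\to P$, $Q\to B$, and $P\to A\cup R$;
\item $R\to Q$, together with arcs back into $x$;
\item arcs from $y$ into the far layers.
\end{itemize}
No arc would go from $X=Z\cup A$ to $Y=Z\cup B$, and none from $P$ to $Y\cup Q\cup\{y\}$. Inside each class I would put a (nearly) regular tournament; this supplies the $\binom{|X|}{2}$-type contributions that the counting in Step~2 shows are necessary. A cleaner way to organise the same thing is a ``distance-layered'' blow-up: assign each class a level so that every arc raises the level by at most one, except along designated return arcs. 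The return arcs would all pass through $x$'s in-side, or would start at vertices whose distance to $y$ is at least $4$. Then every $x$--$y$ walk other than $x\to Z\to y$ must climb through $A,P,R,Q,B$, which takes at least $6$ steps.

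The verification then has three parts, carried out in order. First, orientedness: no unordered pair carries two arcs, which is immediate once the class digraph has no $2$-cycles off the diagonal. Second, semidegree: a row/column count for each class shows $d^\pm\ge 2c$, with equality somewhere so that $\delta^0(H_c)=2c$; the intra-class tournaments contribute $(|\text{class}|-1)/2$ to each side, and the inter-class arcs supply the rest. Third, the distance claim: compute $\mathrm{dist}(x,v)$ and $\mathrm{dist}(v,y)$ for every class, and check that any $x$--$y$ path avoiding the route $xZy$ has length at least $6$. For the paths that use $Z$, check that $N^+(Z)\setminus\{y\}$ lies in classes at distance at least $4$ from $y$.

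The main obstacle is the balancing. Degrees force $x$ and $y$ to have $2c$ in-/out-neighbours, and every class needs about $2c$ out-arcs despite the many forbidden directions; the tight budget $N=7c-1$ leaves essentially no slack. I would first fix the class sizes from the equality case of $h(p,a)$ and $g(a)$, then solve the resulting small linear system for which class pairs are complete. If a one-vertex deficit appears, I would absorb it by letting $x$ or $y$ double as a member of $R$-type layers.

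Finally, the two consequences follow from the construction. Since $7\cdot 2c=2N+2$, the $+3$ in Lemma~\ref{lem:short-link} cannot be lowered to $+2$. Writing $2c\ge N/3-C+1$, the smallest admissible integer is $C=\lceil (c+2)/3\rceil$, so $N=7c-1\approx 21C$. Hence $N\ge(21-o(1))C$ does not suffice in Corollary~\ref{cor:defect-link}, and the coefficient $21$ is asymptotically best possible.
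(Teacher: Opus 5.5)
\textbf{There is a genuine gap.} The statement is an existence claim, and your proposal never produces the graph. The class sizes, the complete class pairs, the tournaments and the degree table are all deferred (``would be chosen'', ``solve the resulting small linear system'', ``absorb a one-vertex deficit by letting $x$ or $y$ double as a member of $R$-type layers''). So nothing is verified.

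More seriously, the template you fix cannot be completed. You put the tight case at $a=d-c=c$ with $t=0$. But with $d=2c$ and $N=7c-1$, the lemma's parameter $c=N-3d+1$ gives $t=d-2c-1=-1$. Steps~1 and~2 of Lemma~\ref{lem:short-link} never use $7d\ge 2N+3$, and at $d=2c$ Step~3 still yields a contradiction for every $a\ge2$:
\begin{itemize}
\item in the branch $L\ge\kappa$, $h(L,a)=L\bigl(d/2-c+(a-1)/4\bigr)=L(a-1)/4>0$;
\item on $c\le a\le 2c-2$, concavity and the endpoint values $8g(c)=3c^2-4c>0$ and $8g(2c-2)=8c-12>0$ give $g>0$.
\end{itemize}
Hence any example must satisfy $|X\setminus Y|\le1$ for every pair of $(2c-2)$-sets $X\subseteq N^+(x)\setminus\{y\}$, $Y\subseteq N^-(y)\setminus\{x\}$. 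Choosing $Y$ to contain two vertices outside $X$ whenever possible then forces $N^+(x)\setminus\{y\}=N^-(y)\setminus\{x\}$, a single set of size exactly $2c-1$. It also forces the arc $x\to y$, since $d^-(y)\ge 2c$. Your template has disjoint classes $A,B$ with $|A|=|B|=c\ge2$, $x\to A$ and $B\to y$. This contradicts the forced structure, so no choice of $p,u,\rho$ or internal tournaments can balance the degrees.

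\textbf{The fix.} The correct extremal point is $a=1$, $p=u=L=2c-1$, $\rho=c$, with equality throughout Step~2: $P$ is a tournament, $P\to R\cup\{x\}$ is complete, and $e(P,A)=0$. Reading off this equality case gives the paper's graph:
\begin{itemize}
\item an independent set $S$ of size $2c-1$ with $x\to S\to y$ and $x\to y$;
\item regular tournaments $P$ and $Q$ on $2c-1$ vertices;
\item an independent set $R$ of size $c$;
\item the arcs $S\to P\to R\to Q\to S$ and $y\to P\cup Q\cup R\to x$.
\end{itemize}
A direct count gives order $7c-1$ and $\delta^0=2c$. Every $x$--$y$ path other than $xy$ and $xsy$ must pass successively through $S,P,R,Q,S$ before reaching $y$, so it has length at least $6$.

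Your closing deductions agree with the paper once such a graph exists: $7\cdot 2c=2N+2$ shows that $+3$ is sharp, and taking $C=\lceil (c+2)/3\rceil$ with $(7c-1)/C\to 21$ shows that the coefficient $21$ is asymptotically best possible.
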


\begin{proof}
Take disjoint sets $S,P,Q,R$ with
\[
  |S|=|P|=|Q|=2c-1,\qquad |R|=c,
\]
together with two vertices $x,y$.  Let $P$ and $Q$ induce regular
tournaments, and let $S$ and $R$ be independent.  Add all arcs
indicated by
\[
  x\to y,\qquad x\to S\to y,
\]
\[
  S\to P\to R\to Q\to S,
\]
and
\[
  y\to P\cup Q\cup R,
  \qquad P\cup Q\cup R\to x,
\]
with no other arcs between the parts.  Regular tournaments exist on
$2c-1$ vertices.

The vertices in $\{x\}$, $\{y\}$, $S$, $P$, $Q$, and $R$ have
respective $(d^-,d^+)$-pairs
\[
  (5c-2,2c),\quad (2c,5c-2),\quad (2c,2c),
  \quad (3c-1,2c),\quad (2c,3c-1),\quad (2c,2c).
\]
Hence $|H_c|=7c-1$ and $\delta^0(H_c)=2c$, so
\[
  7\delta^0(H_c)=14c=2|H_c|+2.
\]

Every $x$--$y$ path is either the arc $xy$, has the form $xsy$ with
$s\in S$, or begins $x\to S\to P$.  In the last case, any simple
path ending at $y$ must pass successively through
\[
  P,\ R,\ Q,\ S,\ y;
\]
arcs inside the regular tournaments can only increase its length.
Thus every remaining $x$--$y$ path has length at least six.

Set
\[
  C_c=\left\lceil\frac{c+2}{3}\right\rceil.
\]
Then $2c\ge(7c-1)/3-C_c+1$, while
$(7c-1)/C_c\to21$.  Consequently, there are no constants
$\gamma<21$ and $K$ for which the condition $N\ge\gamma C+K$ is a
universal sufficient order hypothesis in Corollary~\ref{cor:defect-link}.
\end{proof}

We next record the extension and deletion statements used below.

\begin{lemma}\label{lem:extension-deletion}
Let $G$ be an oriented graph.
\begin{enumerate}
\item Let $d$ and $L$ be nonnegative integers.  If
$\delta^+(G)\ge d$, $F\subseteq V(G)$, $v\notin F$, and
\[
  |F|+L\le d,
\]
then $G$ contains a simple path of length $L$ starting at $v$ and otherwise avoiding $F$.
\item Let $G$ have order $n$ and minimum semidegree at least
$\lceil n/3\rceil$, and let $R\ge0$ be an integer.  If
\[
  n\ge15R+7
\]
and $H$ is obtained by deleting at most $R$ vertices, then every
ordered pair of distinct vertices of $H$ is joined in $H$ by a path
of length $3$, $4$, or $5$.
\item Let $G$ have order $n$ and minimum semidegree at least
$\lceil n/3\rceil$.  Let $\ell$ and $B$ be integers with
$\ell\ge B\ge6$, let $x,a$ be distinct
vertices, and let $Z\subseteq V(G)\setminus\{x,a\}$.  Suppose that,
for each $j\in\{B-5,B-4,B-3\}$, there is an $x$--$a$ path $Q_j$ of
length $j$ whose internal vertices lie in $Z$.  Put
\[
  R=|Z|+\ell-B.
\]
If
\[
  |Z|+1+\ell-B\le\left\lceil\frac n3\right\rceil
  \qquad\text{and}\qquad
  n\ge15R+7,
\]
then $x$ lies on a copy of $C_\ell$.
\end{enumerate}
\end{lemma}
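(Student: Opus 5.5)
The three parts are independent; I would prove them in order, using Lemma~\ref{lem:short-link} for (ii) and then (i) and (ii) together for (iii).

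\emph{Part (i).} I would build the path greedily. Start with $v_0=v$. Having built $v_0\cdots v_i$ with $i<L$, the vertex $v_i$ must avoid $F\cup\{v_0,\dots,v_{i-1}\}$ when choosing its next vertex. This forbidden set has at most $|F|+i\le |F|+L-1<d\le d^+(v_i)$ elements. So some outneighbour $v_{i+1}$ is available, and after $L$ steps we have the required path.

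\emph{Part (ii).} Let $r\le R$ be the number of deleted vertices. Then $H$ has order $N=n-r$ and $d:=\delta^0(H)\ge\lceil n/3\rceil-r$. I would apply Lemma~\ref{lem:short-link} to $H$, which needs $d\ge3$ and $7d\ge2N+3$.
\begin{itemize}
\item The bound $d\ge3$ is immediate from $\lceil n/3\rceil\ge5r+3$.
\item The second condition follows from $7\lceil n/3\rceil\ge 2n+5r+3$.
\end{itemize}
For the second inequality, raising $n$ by $3$ raises the left side by $7$ and the right side by $6$. So it suffices to check the three base values $n=15r+7,15r+8,15r+9$. Each reduces to $35r+21\ge 35r+17$, $35r+19$, $35r+21$ respectively. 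Since $r\le R$, the hypothesis $n\ge15R+7$ implies $n\ge 15r+7$.

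\emph{Part (iii).} The plan is to close one of the paths $Q_j$ into a cycle of length exactly $\ell$.
\begin{enumerate}
\item First apply (i) in $G$ from the vertex $a$ with $F=Z\cup\{x\}$ and $L=\ell-B$. This is allowed because $|F|+L=|Z|+1+\ell-B\le\lceil n/3\rceil$. It gives a path $a=w_0w_1\cdots w_{\ell-B}$ avoiding $Z\cup\{x\}$; if $\ell=B$ this is just $a$ itself.
\item Next delete the $|Z|+\ell-B=R$ vertices $Z\cup\{w_0,\dots,w_{\ell-B-1}\}$ to obtain $H$. Both $w:=w_{\ell-B}$ and $x$ survive, and they are distinct.
\item By (ii), $H$ contains a $w$--$x$ path $P$ of length $k\in\{3,4,5\}$. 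Take $j=B-k\in\{B-5,B-4,B-3\}$.
\item The concatenation $Q_j\,(w_0\cdots w_{\ell-B})\,P$ is then a closed walk of length $j+(\ell-B)+k=\ell$ through $x$.
\end{enumerate}

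It remains to check that this walk is a cycle, which is the only delicate point. The internal vertices of $Q_j$ lie in $Z$. The vertices $w_i$ lie outside $Z\cup\{x\}$. The interior of $P$ avoids $Z$, $x$, and all $w_i$ with $i<\ell-B$. Hence these three pieces meet only at the shared endpoints $a$, $w$, and $x$, so the walk is a copy of $C_\ell$ through $x$.
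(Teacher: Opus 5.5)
Your proposal is correct and follows the paper's proof essentially step for step. Part (i) is greedy extension. Part (ii) applies Lemma~\ref{lem:short-link} to the graph remaining after deletion. Part (iii) extends from $a$ avoiding $Z\cup\{x\}$, deletes $Z$ together with the non-terminal vertices of that path, and closes with $Q_{B-k}$. The only difference is cosmetic and lies in (ii): you verify $7\lceil n/3\rceil\ge 2n+5r+3$ by monotonicity in $n$ from the base values $15r+7,15r+8,15r+9$, whereas the paper does a residue computation with $n=3m+s$; both are valid.
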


\begin{proof}
For (i), extend greedily.  Before the $(j+1)$st step, at most $|F|+j$ vertices are forbidden, which is smaller than $d$ for $0\le j<L$.

For (ii), let $r\le R$ vertices be deleted, put $N=|H|=n-r$,
and write $n=3m+s$ with $s\in\{0,1,2\}$.  Set
\[
  D=\left\lceil\frac n3\right\rceil
  =\begin{cases}
     m,&s=0,\\
     m+1,&s=1,2.
   \end{cases}
\]
Then $\delta^0(H)\ge D-r$.  A direct calculation shows that
\begin{equation}\label{eq:deletion-residue}
7(D-r)-\bigl(2(n-r)+3\bigr)
=\begin{cases}
  m-5r-3,&s=0,\\
  m-5r+2,&s=1,\\
  m-5r,&s=2.
\end{cases}
\end{equation}
The hypothesis $n\ge15R+7$ gives
\[
  m\ge
  \begin{cases}
    5R+3,&s=0,\\
    5R+2,&s=1,2.
  \end{cases}
\]
Since $r\le R$, the three expressions in
\eqref{eq:deletion-residue} are respectively at least $0$, $4$, and
$2$.  Moreover, $D-r\ge D-R\ge4R+3\ge3$.
Lemma~\ref{lem:short-link} now applies to $H$.

For (iii), apply (i) with forbidden set $Z\cup\{x\}$ to obtain a path
$P$ of length $\ell-B$ from $a$ to a vertex $y$.  Delete $Z$ and
$V(P)\setminus\{y\}$.  At most $R$ vertices are deleted, so (ii)
gives a $y$--$x$ path of some length $t\in\{3,4,5\}$.  The paths
$Q_{B-t}$, $P$, and the $y$--$x$ path are internally disjoint and
together form a $C_\ell$.
\end{proof}

\begin{remark}\label{rem:constant-track}
The three cases in~\eqref{eq:n0-def} come from three different deletion
budgets.  For $q=2$, the only critical use of short linking deletes
$R=\ell-4=2$ vertices in the transitive-entry branch, giving
$15R+7=37$.  For $q=3$, a terminal-safe three-chain is already a
$C_9$; the largest remaining budget is $R=\ell-3=6$ in the
distinct-row subcase of the transitive-entry branch, giving
$15R+7=97$.  For $q\ge4$, the terminal-safe-chain branch of
Proposition~\ref{prop:first-port} may delete
$R=\ell-1=3q-1$ vertices, giving $15R+7=45q-8$.
These are the smallest uniform cutoffs delivered by the present
argument.  The stable balanced-cut branches use only the matching
inequality $m\ge2q+11$.
\end{remark}

Following Kelly, K\"uhn and Osthus
~\cite[discussion preceding Fact~18]{KKO}, an $xy$-\emph{butterfly}
consists of five distinct vertices $x,h,z,b,y$ and the six arcs
\[
  xh,\ xz,\ hz,\ zb,\ zy,\ by.
\]
It contains $x$--$y$ paths of lengths $2$, $3$, and $4$.

The sharp linking lemma also gives a linear order bound in the full
prescribed-vertex theorem of Kelly, K\"uhn and Osthus.

\begin{corollary}
\label{cor:kko-linear}
Let $\ell\ge7$ and $n\ge15\ell-60$.  If $G$ is an oriented graph on
$n$ vertices with
\[
  \delta^0(G)\ge\left\lfloor\frac n3\right\rfloor+1,
\]
then every vertex of $G$ lies on a copy of $C_\ell$.
\end{corollary}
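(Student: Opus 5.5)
The plan is to root an $xy$-butterfly at the prescribed vertex, extend it by a greedy path, and close the cycle with Lemma~\ref{lem:short-link}. The argument is the same as Lemma~\ref{lem:extension-deletion}(iii) with $B=7$. The only change is that I redo the deletion arithmetic of Lemma~\ref{lem:extension-deletion}(ii) with the stronger degree $D=\lfloor n/3\rfloor+1$; this is what lowers the order bound from $15\ell-53$ to $15\ell-60$.

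\emph{Step 1: a butterfly at $x$.} Fix $x$ and put $\delta=\delta^0(G)\ge\lfloor n/3\rfloor+1$. Then $\delta>n-2\delta$, so by Lemma~\ref{lem:elementary}(ii) no set of size at least $\delta$ is independent. Hence $G[N^+(x)]$ contains an arc $h\to z$. Neither $x$ nor $h$ lies in $N^+(z)$, because $x\to z$ and $h\to z$ are arcs and $G$ is oriented. Since $|N^+(z)|\ge\delta$, the set $N^+(z)$ is not independent, so it contains an arc $b\to y$. The vertices $x,h,z,b,y$ are distinct and form an $xy$-butterfly, with $x$--$y$ paths $xzy$, $xhzy$ and $xhzby$ of lengths $2,3,4$ whose internal vertices lie in $Z=\{h,z,b\}$.

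\emph{Step 2: extension.} Apply Lemma~\ref{lem:extension-deletion}(i) with forbidden set $Z\cup\{x\}$ to get a path $P$ of length $\ell-7$ from $y$ to some vertex $w$. This needs $4+\ell-7=\ell-3\le\delta$, which holds because $n\ge15\ell-60$ and $\ell\ge7$ give $n\ge3\ell-9$.

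\emph{Step 3: short linking after deletion.} Delete $Z$ and $V(P)\setminus\{w\}$, which is $r\le R:=\ell-4$ vertices. In the remaining graph $H$ we have $N=n-r$ and $\delta^0(H)\ge D-r$. Writing $n=3m+s$ with $s\in\{0,1,2\}$ and $D=m+1$, a direct computation gives
\[
7(D-r)-\bigl(2(n-r)+3\bigr)=m-5r+4-2s.
\]
Using $r\le R$ and $n\ge15R$, the three residues work out as follows.
\begin{itemize}
\item For $s=0$ we need $m\ge5R-4$, which is implied by $n\ge15R$.
\item For $s=1$ we need $m\ge5R-2$, which follows from $n\ge15R$ and $n\equiv1\pmod3$, since these force $n\ge15R+1$.
\item For $s=2$ we need $m\ge5R$, which follows since $n\equiv2\pmod3$ forces $n\ge15R+2$.
\end{itemize}
In each case the expression is nonnegative. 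Also $D-r\ge3$ comfortably, so Lemma~\ref{lem:short-link} gives a $w$--$x$ path in $H$ of some length $t\in\{3,4,5\}$.

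\emph{Step 4: closing the cycle.} Combine this $w$--$x$ path with the butterfly path of length $7-t\in\{2,3,4\}$ and with $P$. The three paths are internally disjoint, so together they form a cycle through $x$ of length $(7-t)+(\ell-7)+t=\ell$.

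I expect the only delicate point to be Step~3, namely checking the residue cases so that the constant comes out exactly as $-60$. Steps~1, 2 and 4 are routine.
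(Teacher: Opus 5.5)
Your proposal is correct and follows the paper's proof essentially step for step. Both arguments use the same ingredients: the butterfly built from two non-independent outneighbourhoods, a greedy extension of length $\ell-7$ from $y$, deletion of exactly $\ell-4$ vertices, and the identity $7(m+1-r)-\bigl(2(n-r)+3\bigr)=m-5r+4-2s$. The only cosmetic difference is in the final check. You verify the three residues separately, whereas the paper observes once that $n\ge15\ell-60$ forces $m\ge5\ell-20=5r$ for every $s\le2$.
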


\begin{proof}
Fix $x\in V(G)$ and put $D=\lfloor n/3\rfloor+1$.  By
Lemma~\ref{lem:elementary}(ii), every independent set has size at most
$n-2D<D$.  Hence $N^+(x)$ contains an arc $h\to z$, and $N^+(z)$
contains an arc $b\to y$.  The five vertices are distinct, and
\[
  xh,xz,hz,zb,zy,by
\]
form an $xy$-butterfly.
Indeed, $b,y\notin\{x,h\}$ because $x\to z$ and $h\to z$, whereas
$z\to b$ and $z\to y$.
This is the argument of~\cite[Fact~18]{KKO}; we include it to keep
track of the order bound.

Since $n\ge15\ell-60$ and $\ell\ge7$, we have
\[
  4+(\ell-7)=\ell-3\le D.
\]
Lemma~\ref{lem:extension-deletion}(i) gives a path $P$ of length
$\ell-7$ from $y$ to a vertex $v$, otherwise avoiding
$\{x,h,z,b\}$.  Delete
\[
  \{h,z,b\}\cup(V(P)\setminus\{v\})
\]
and call the remaining graph $H$.  Exactly $r=\ell-4$ vertices are
deleted.

Write $n=3m+s$ with $s\in\{0,1,2\}$.  Then
\[
  |H|=3m+s-r,
  \qquad
  \delta^0(H)\ge m+1-r.
\]
Moreover, $n\ge15\ell-60$ gives $m\ge5\ell-20=5r$ since $s\le2$, and hence
\[
  7(m+1-r)-\bigl(2(3m+s-r)+3\bigr)
  =m-5r+4-2s\ge0.
\]
Also $m+1-r\ge4\ell-15\ge3$.  Lemma~\ref{lem:short-link} therefore
gives a $v$--$x$ path in $H$ of some length
$t\in\{3,4,5\}$.  Choose the $x$--$y$ path of length $7-t$ in the
butterfly.  Together with $P$ and the $v$--$x$ path, it forms a simple
cycle of length
\[
  (7-t)+(\ell-7)+t=\ell.
\]
\end{proof}

Together with Lemmas~16, 17 and 19 of Kelly, K\"uhn and
Osthus~\cite{KKO}, Corollary~\ref{cor:kko-linear} replaces the order
hypothesis in their Theorem~4 by $n\ge\ell$ for $\ell=4,5,6$, and by
$n\ge15\ell-60$ for $\ell\ge7$.

Taking $\ell=3q$ in Corollary~\ref{cor:kko-linear} gives the following
bound when $3\nmid n$.

\begin{corollary}\label{cor:nonzero-residue}
Let $q\ge3$, let $3\nmid n$, and suppose $n\ge45q-60$.  If $G$ is an
oriented graph on $n$ vertices with
\[
  \delta^0(G)\ge\left\lceil\frac n3\right\rceil,
\]
then every vertex of $G$ lies on a copy of $C_{3q}$.
\end{corollary}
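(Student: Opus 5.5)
Since $3\nmid n$, we have $\lceil n/3\rceil=\lfloor n/3\rfloor+1$. The hypothesis of Corollary~\ref{cor:nonzero-residue} is therefore exactly the semidegree hypothesis of Corollary~\ref{cor:kko-linear}. The plan is to apply that corollary with $\ell=3q$, after checking its two side conditions.

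First, $q\ge3$ gives $\ell=3q\ge9\ge7$, so the range restriction $\ell\ge7$ holds. Second, the order condition $n\ge15\ell-60$ becomes $n\ge45q-60$ when $\ell=3q$, and this is exactly what we assume. Corollary~\ref{cor:kko-linear} then gives, for every vertex $x\in V(G)$, a copy of $C_{3q}$ through $x$.

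The hardest part, such as it is, is already inside Corollary~\ref{cor:kko-linear}. There one builds the $xy$-butterfly from the non-independence of outneighbourhoods, which holds because $n-2D<D$ for $D=\lfloor n/3\rfloor+1$. One then extends from $y$ by a path of length $\ell-7$ and closes the cycle using Lemma~\ref{lem:short-link}. All we need to confirm here is that the residue bookkeeping in that proof does not secretly require $3\mid n$. It does not: the inequality $m-5r+4-2s\ge0$ with $r=\ell-4$ follows from $m\ge5\ell-20$, which holds for every $s\in\{0,1,2\}$.

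So the argument is a single substitution, and no step should be a genuine obstacle. Note also that this route gives a better order bound than Theorem~\ref{thm:main} in the case $3\nmid n$, because the butterfly case alone suffices there.
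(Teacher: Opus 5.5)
Your proposal is correct and is the paper's proof: when $3\nmid n$, use $\lceil n/3\rceil=\lfloor n/3\rfloor+1$ and apply Corollary~\ref{cor:kko-linear} with $\ell=3q$, noting that $\ell\ge9\ge7$ and $15\ell-60=45q-60$. Your extra check that the bookkeeping inequality $m-5r+4-2s\ge0$ holds for every residue $s\in\{0,1,2\}$ is accurate, but that verification already lies inside the proof of Corollary~\ref{cor:kko-linear}.
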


\begin{proof}
Since $3\nmid n$,
\[
  \left\lceil\frac n3\right\rceil
  =\left\lfloor\frac n3\right\rfloor+1.
\]
Apply Corollary~\ref{cor:kko-linear} with $\ell=3q$.
\end{proof}

We next isolate the equality structure produced by an independent outneighbourhood.

\begin{lemma}\label{lem:balanced-cut}
Let $G$ be an oriented graph on $3m$ vertices with $\delta^0(G)\ge m$.  If $N^+(z)$ is independent, then, with
\[
  A=N^+(z),\qquad B=V(G)\setminus A,
\]
we have $|A|=m$, $|B|=2m$, and
\[
  d_B^+(a)=d_B^-(a)=m
  \qquad\text{for every }a\in A.
\]
In particular, each pair in $A\times B$ is joined by exactly one arc.
\end{lemma}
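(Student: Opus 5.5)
We prove Lemma~\ref{lem:balanced-cut} by squeezing the size of $A$ from both sides and then forcing every vertex of $A$ to be tight in both directions.

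\emph{Size of $A$.} Since $A=N^+(z)$, we have $|A|\ge\delta^0(G)\ge m$. Since $A$ is independent, Lemma~\ref{lem:elementary}(ii) gives
\[
|A|\le 3m-2\delta^0(G)\le m.
\]
Hence $|A|=m$ and $|B|=2m$, and the same inequality forces $\delta^0(G)=m$.

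\emph{Degrees into $B$.} Fix $a\in A$. Because $A$ is independent, every neighbour of $a$ lies in $B$, so $N^+(a),N^-(a)\subseteq B$. These two sets are disjoint because $G$ is oriented. Therefore
\[
d_B^+(a)+d_B^-(a)=d^+(a)+d^-(a)\le |B|=2m.
\]
Each summand is at least $\delta^0(G)\ge m$, so both equal $m$.

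\emph{Arcs between $A$ and $B$.} Equality in the last display also shows that $N^+(a)\cup N^-(a)=B$. So every $b\in B$ is joined to $a$, and by orientedness it is joined by exactly one arc. This gives the final assertion.

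There is no real obstacle here. The only point needing care is to note that independence of $A$ puts all neighbours of $a$ inside $B$, so the degree count is taken against $|B|=2m$ rather than against the whole vertex set.
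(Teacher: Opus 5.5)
Your proof is correct and follows essentially the same route as the paper's. You squeeze $|A|=m$ between $d^+(z)\ge m$ and Lemma~\ref{lem:elementary}(ii), then note that the disjoint in- and outneighbourhoods of each $a\in A$ lie in the $2m$-set $B$, so both have size exactly $m$ and together cover $B$.
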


\begin{proof}
Lemma~\ref{lem:elementary}(ii) gives $|A|\le m$, while $d^+(z)\ge m$ gives $|A|\ge m$.  Thus $|A|=m$.  Since $A$ is independent, every in- and outneighbour of $a\in A$ lies in $B$.  The two neighbourhoods are disjoint, have size at least $m$ each, and lie in the $2m$-set $B$, so equality holds throughout.
\end{proof}

We call a partition $(A,B)$ of $V(G)$ a \emph{balanced cut} if, for
some integer $m$, the set $A$ is independent, $|A|=m$, $|B|=2m$, and
\[
  d_B^+(a)=d_B^-(a)=m
  \qquad\text{for every }a\in A.
\]
If $A=N^+(z)$, we say that the balanced cut is \emph{rooted at $z$}.

In a balanced cut, put
\[
  O(a):=N_B^+(a)\qquad(a\in A).
\]
Thus every row $O(a)$ has size $m$.  For $b\in B$, define
\[
  s_b:=|N_A^-(b)|=|\{a\in A:a\to b\}|,
  \qquad t_b:=m-s_b=|N_A^+(b)|.
\]
The semidegree condition gives the fundamental inequalities
\begin{equation}\label{eq:bookkeeping}
  d_B^+(b)\ge s_b,\qquad d_B^-(b)\ge t_b,\qquad
  \sum_{b\in B}s_b=\sum_{b\in B}t_b=m^2.
\end{equation}

\section{Nonextendable endpoints and row-family stability}\label{sec:stability}

Throughout the first part of this section, $(A,B)$ is a balanced cut and $x\in B$ satisfies $x\to A$.  Thus $x\notin O(a)$ for every $a\in A$.  A \emph{$D_3$-transition} into $a$ is a simple path
\[
  a'\to b\to c\to a,
  \qquad a'\in A\setminus\{a\},\quad b,c\in B\setminus\{x\}.
\]

\begin{lemma}\label{lem:dead}
If $m\ge3$, then at most three vertices of $A$ admit no $D_3$-transition.
\end{lemma}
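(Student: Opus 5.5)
The plan is to argue by contradiction and to convert "no $D_3$-transition into $a$" into a closed set of forbidden arcs inside $B$, and then to play it against the bookkeeping identities~\eqref{eq:bookkeeping}.

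\emph{Reformulating deadness.} Fix a dead vertex $a\in A$, meaning one admitting no $D_3$-transition. Since $x\to A$, the vertex $x$ lies in $N_B^-(a)$. Hence
\[
  I_a:=N_B^-(a)\setminus\{x\}
\]
has exactly $m-1$ vertices, and $I_a\cap O(a)=\varnothing$. Put
\[
  W_a:=\bigcup_{a'\ne a}O(a'),\qquad U_a:=B\setminus W_a .
\]
Note that $x\in U_a$, because $x\notin O(a')$ for every $a'$. Deadness says exactly that no vertex of $W_a$ sends an arc into $I_a$. Hence for every $c\in I_a$,
\[
  N_B^-(c)\subseteq U_a\setminus\{c\}.
\]
Every $u\in U_a$ has $s_u\le1$, since its only possible $A$-inneighbour is $a$. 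Therefore $t_u\ge m-1$, and by~\eqref{eq:bookkeeping} we get $d_B^-(u)\ge m-1$.

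\emph{Lower bound on $|U_a|$.} For $c\in I_a$ we have $d^-_B(c)\ge t_c\ge1$. I would pick $c\in I_a$ with small $s_c$, or average over $I_a$ using Lemma~\ref{lem:elementary}(i) applied to $G[I_a]$. The intended outcome is an inequality of the form
\[
  |U_a|\ge \tfrac{m}{2}+O(1).
\]
In particular, $U_a$ is a set of $\Omega(m)$ vertices of $B$, all with $s_u\le1$.

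\emph{Combining four dead vertices.} Now suppose $a_1,\dots,a_4$ are dead. A vertex lying in $U_{a_i}\cap U_{a_j}$ with $i\ne j$ has $s_u=0$. So each $U_{a_i}$ consists of a common core $S_0$ of vertices with $s=0$ together with private vertices whose unique $A$-inneighbour is $a_i$. Since $\sum_b s_b=m^2$ and $s_b\le m$, the set $T=\{b:s_b\le1\}$ satisfies $|T|\le m+O(1)$. Its complement then consists of vertices with $s_b$ close to $m$, and these vertices form most of every $W_{a_i}$.

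The aim is to show that these near-full vertices must send arcs into $I_{a_i}$. A vertex $b$ with $s_b\approx m$ needs $d_B^+(b)\ge s_b\approx m$ out-arcs inside $B$. Deadness, however, forbids all arcs from $B\setminus U_{a_i}$ into $\bigcup_i I_{a_i}$. Meanwhile the sets $I_{a_i}$ are large: each has size $m-1$, they lie in $B\setminus O(a_i)$, and they are forced to meet substantially. I would count $e(B\setminus T,\,B)$ from below by $\sum s_b$ and from above using the forbidden arcs together with $e(X)\le\binom{|X|}2$. The contradiction should come once four dead vertices are present, with $m\ge3$ handling the small cases.

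\emph{Expected obstacle.} The main difficulty is the final double count, namely getting a clean inequality rather than a slack one. With three dead vertices the bound is presumably attained, for example by a near-extremal configuration. So the counting must genuinely use that four private or common pieces do not fit. If the direct count is too weak, my fallback is to exploit the $s=0$ core $S_0$ more carefully. Each vertex of $S_0$ is dominated-into by all of $A$ in reverse, i.e.\ it satisfies $u\to A$, just like $x$. The idea would be to iterate: treat these vertices as additional copies of $x$, and show that they together with $I_{a_i}$ exceed $2m$ inside $B$.
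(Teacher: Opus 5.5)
There is a genuine gap. Your reformulation is correct and matches the paper's setup: $N_B^-(c)\subseteq U_a$ for $c\in I_a$, and $s_u\le1$ on $U_a$. After that, however, the proposal never reaches a contradiction, and the one concrete intermediate claim it relies on is false. A dead vertex need not satisfy $|U_a|\ge m/2+O(1)$. Take $A=\{a,a_1,\dots,a_{m-1}\}$ independent, $B=\{x\}\cup J\cup K$ with $J=\{c_1,\dots,c_{m-1}\}$, $K=\{k_0,\dots,k_{m-1}\}$, and $m$ odd. Add the arcs $x\to A\cup J$, $K\to x$, $J\to a\to K$, $a_i\to J\cup\{k_i\}$, $k_j\to a_i$ for $j\ne i$, $c_j\to K\setminus\{k_j\}$, and a regular tournament on $K$. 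One checks that $\delta^0\ge m$ and that $(A,B)$ is a balanced cut with $x\to A$. Since $N_B^-(c_j)=\{x\}$, the vertex $a$ is dead, yet $U_a=\{x,k_0\}$.

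The same example defeats two other heuristics in your third paragraph. First, vertices outside $T$ need not have $s_b$ close to $m$: here $s_{k_i}=2$ for $i\ge1$. Second, the sets $I_a$ need not ``meet substantially''. When $I_a$ receives no arc from $B\setminus\{x\}$, every $c\in I_a$ has $d_B^-(c)\le1$, hence $t_c=1$ by~\eqref{eq:bookkeeping}. So $a$ is the only $A$-outneighbour of $c$, and such sets $I_a$ are pairwise \emph{disjoint}. Beyond this, the final double count is only announced, and, as you say yourself, it is unclear that it closes.

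What you are missing is a dichotomy on $L_a:=U_a\setminus\{x\}$, together with a cross-interaction between two dead vertices.

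If $L_a=\varnothing$, the disjointness just described applies. Three pairwise disjoint $(m-1)$-sets cannot fit in the $(2m-1)$-set $B\setminus\{x\}$, so at most two dead vertices have $L_a=\varnothing$.

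Now suppose distinct dead vertices $a,a'$ both have $L_a,L_{a'}\ne\varnothing$.
First, $L_a\cap L_{a'}=\varnothing$. Every vertex of the intersection has $t=m$, and its in-neighbours in $B\setminus\{x\}$ stay inside the intersection. This forces the intersection to have size at least $2m-1$, whereas $t_x=m$ and $\sum_b t_b=m^2$ cap it at $m-1$.
Next, every $z\in L_a$ satisfies $z\to a'$, so $z\in I_{a'}$. Deadness of $a'$ then puts at least $m-2$ in-neighbours of $z$ into $L_{a'}$, and symmetrically for vertices of $L_{a'}$.
Write $k=|L_a|$ and $k'=|L_{a'}|$. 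Orientedness gives $(k+k')(m-2)\le kk'$, while $\sum_b t_b=m^2$ gives $k+k'\le m$. Hence $m-2\le (k+k')/4\le m/4$, which is impossible for $m\ge3$.

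So at most one dead vertex has $L_a\ne\varnothing$, and there are at most three dead vertices in total.
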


\begin{proof}
Call such a vertex $a$ \emph{nonextendable} and put
\[
  J_a=N_B^-(a)\setminus\{x\},\qquad |J_a|=m-1,
\]
and
\[
  L_a=\{b\in B\setminus\{x\}:N_A^-(b)\subseteq\{a\}\}.
\]
If $c\in J_a$, then every inneighbour of $c$ in $B\setminus\{x\}$ lies in $L_a$; otherwise some $b\to c$ has an inneighbour $a'\in A\setminus\{a\}$, giving $a'\to b\to c\to a$.

\smallskip
\noindent\emph{Nonextendable vertices with $L_a=\varnothing$.}
For $c\in J_a$, the only possible inneighbour of $c$ in $B$ is $x$,
so $d_B^-(c)\le1$.  By~\eqref{eq:bookkeeping}, $t_c\le1$, while
$c\to a$ gives $t_c\ge1$.  Hence $t_c=1$ and $a$ is the unique
outneighbour of $c$ in $A$.  It follows that the sets $J_a$ belonging
to distinct nonextendable vertices with $L_a=\varnothing$ are pairwise
disjoint.  Since three such sets have total size
$3(m-1)>2m-1=|B\setminus\{x\}|$, there are at most two nonextendable
vertices of this kind.

\smallskip
\noindent\emph{Nonextendable vertices with $L_a\ne\varnothing$.}
We show that there is at most one.  Suppose that distinct nonextendable
vertices $a,a'$ have nonempty $L_a,L_{a'}$.  First, these sets are
disjoint.  Indeed, if $L_0=L_a\cap L_{a'}$ is nonempty, then every
$z\in L_0$ has $s_z=0$ and $t_z=m$.  Moreover,
$z\in J_a\cap J_{a'}$, and all its inneighbours in
$B\setminus\{x\}$ lie in $L_0$.  Therefore every $z\in L_0$ has at
least $m-1$ inneighbours in $L_0$, and
\[
  |L_0|(m-1)\le e(L_0)\le\binom{|L_0|}{2}.
\]
Thus $|L_0|\ge2m-1$.  On the other hand, $t_x=m$ and $\sum_{b\in B}t_b=m^2$, so $(|L_0|+1)m\le m^2$, giving $|L_0|\le m-1$, a contradiction.

Write $k=|L_a|$ and $k'=|L_{a'}|$.  For $z\in L_a$, we have $s_z\le1$ and hence $t_z\ge m-1$.  Since $a'\ne a$, the definition of $L_a$ gives $z\to a'$, so $z\in J_{a'}$.  Nonextendability of $a'$ implies that at least $m-2$ inneighbours of $z$ lie in $L_{a'}$.  Consequently,
\[
  e(L_{a'},L_a)\ge k(m-2).
\]
Symmetrically, $e(L_a,L_{a'})\ge k'(m-2)$.  The two sets are disjoint and the graph is oriented, so at most one of the two possible arcs occurs on each pair in $L_a\times L_{a'}$.  Therefore
\[
  (k+k')(m-2)
  \le e(L_{a'},L_a)+e(L_a,L_{a'})
  \le kk'.
\]
Furthermore,
\[
  m+(k+k')(m-1)\le\sum_{b\in B}t_b=m^2,
\]
so $k+k'\le m$.  Consequently,
\[
  m-2\le\frac{kk'}{k+k'}\le\frac{k+k'}4\le\frac m4,
\]
which is impossible for $m\ge3$.  Hence at most one nonextendable vertex has nonempty $L_a$, and the total number of nonextendable vertices is at most three.
\end{proof}

We now turn to the set-system statement.  Let $U$ be a finite set, let $C\subseteq U$, and let $\calO=(O_i:i\in I)$ be a family of equal-sized subsets of $U$, each meeting $C$.  A \emph{terminal-safe three-chain} consists of four distinct indices $i_0,i_1,i_2,i_3$ and four pairwise distinct elements $b_0,b_1,b_2,d$ such that
\[
  b_j\in O_{i_j}\setminus O_{i_{j+1}}\quad(0\le j\le2),
  \qquad d\in O_{i_3}\cap C.
\]

\begin{lemma}\label{lem:rainbow}
Let $S_1,S_2,S_3,S_4$ be four distinct $k$-subsets of a set $U$.
There is a cyclic ordering of these sets and pairwise distinct elements
\[
  z_i\in S_i\setminus S_{i+1}\qquad(i\in\mathbb Z/4\mathbb Z).
\]
\end{lemma}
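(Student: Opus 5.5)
The plan is to reduce the statement to Hall's theorem for the four difference sets along a cyclic ordering, and then to choose the ordering well. Fix a cyclic ordering $S_1,S_2,S_3,S_4$ and put $D_i=S_i\setminus S_{i+1}$ for $i\in\mathbb Z/4\mathbb Z$. We need a system of distinct representatives for $(D_1,D_2,D_3,D_4)$. The first step records two basic facts.

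First, each $D_i$ is nonempty. The sets are distinct and have equal size $k$, so $S_i\not\subseteq S_{i+1}$; moreover $|D_i|=|S_{i+1}\setminus S_i|$.

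Second, every element $u\in U$ lies in at most two of the $D_i$. Indeed, $u\in D_i$ exactly when the membership of $u$ switches from ``in'' to ``out'' between positions $i$ and $i+1$. Around a cycle of length four, the in-to-out switches alternate with out-to-in switches, so there are at most two of them.

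By Hall's theorem, a failure must be a subfamily of $t\in\{2,3,4\}$ of the $D_i$ whose union has fewer than $t$ elements.

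The second step chooses, among the $3$ cyclic orderings and their $2$ directions, one that maximizes $\sum_i|D_i|$, equivalently minimizes $\sum_i|S_i\cap S_{i+1}|$. We then rule out each possible Hall violation.

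\emph{The case $t=2$.} Here $D_i=D_j=\{z\}$ for some $i\ne j$. By the second fact, $z$ switches out exactly at positions $i$ and $j$, so $i,j$ are non-adjacent (or adjacent with an intermediate in-switch). In either case $|S_i\cap S_{i+1}|=|S_j\cap S_{j+1}|=k-1$. I would attempt to show that re-pairing the sets into another cyclic ordering strictly increases $\sum|D_i|$, unless every pair among the four sets meets in exactly $k-1$ elements.

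\emph{The cases $t=3,4$.} A union of size $2$ covering three nonempty $D_i$, or a union of size $3$ covering all four, forces several of the $D_i$ to be singletons sharing elements. I expect this also to reduce, via the same maximality exchange, to the situation where all pairwise intersections have size $k-1$.

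That residual situation is handled by the classical dichotomy. A family of $k$-sets with all pairwise intersections of size $k-1$ either has a common $(k-1)$-core $K$ (a sunflower) or lies inside a single $(k+1)$-set $T$.

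In the sunflower case $S_i=K\cup\{x_i\}$ with distinct $x_i$. Then $D_i=\{x_i\}$, and $z_i=x_i$ works for any ordering.

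In the other case $S_i=T\setminus\{y_i\}$ with distinct $y_i$. Then $D_i=\{y_{i+1}\}$, and these are again distinct.

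The main obstacle is the exchange argument in the second step. One must verify, by a finite case check over the three cyclic pairings, that any Hall violation for the maximizing ordering forces all pairwise intersections to be of size $k-1$. I would carry this out by tracking, for each element, its membership pattern in $\{0,1\}^4$. The quantity $\sum|D_i|$ counts in-to-out switches of these patterns. Only the patterns with two switches (alternating $1010$ or $0101$) distinguish the orderings, and the check then amounts to comparing counts of the three alternating pattern types. If that check proves messy, the fallback is direct: pick an ordering, attempt the greedy choice $z_1,z_2,z_3,z_4$, and when it fails use the singleton structure of the offending $D_i$ to exhibit an explicit alternative ordering.
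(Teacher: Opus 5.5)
There is a genuine gap, and it is in the step you flag as the main obstacle: the extremal choice is not just unverified, it is false. Take $U=\{1,2,3,4\}$ and $S_1=\{1,2\}$, $S_2=\{3,4\}$, $S_3=\{1,3\}$, $S_4=\{2,4\}$. The cyclic orders $S_1S_2S_3S_4$ and $S_1S_2S_4S_3$ have $\sum_i|D_i|=6$ in either direction, whereas $S_1S_3S_2S_4$ has $\sum_i|D_i|=4$.

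All four maximizing orders fail:
\begin{itemize}
\item $S_2\setminus S_3=S_4\setminus S_1=\{4\}$ for $S_1S_2S_3S_4$;
\item $S_1\setminus S_4=S_3\setminus S_2=\{1\}$ for its reverse;
\item $S_2\setminus S_4=S_3\setminus S_1=\{3\}$ for $S_1S_2S_4S_3$;
\item $S_1\setminus S_3=S_4\setminus S_2=\{2\}$ for its reverse.
\end{itemize}
Only the minimizing order $S_1S_3S_2S_4$ (in either direction) works, e.g.\ with representatives $2,1,3,4$. So the exchange statement you planned to prove is false here: no re-pairing strictly increases the sum, yet $S_1\cap S_2=\varnothing$ rather than having size $k-1$. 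The heuristic points the wrong way. An obstruction $D_i=D_{i+2}=\{z\}$ forces $z$ to alternate along the order, so $z$ contributes two switches to $\sum_i|D_i|$. Maximizing switches therefore favours obstructions.

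You also missed the fact that makes the Hall analysis short: $D_i\cap D_{i+1}=\varnothing$ always, since $D_{i+1}$ requires membership in $S_{i+1}$ and $D_i$ forbids it. Hence your alternative ``adjacent with an intermediate in-switch'' never occurs, and $(D_1\cup D_3)\cap(D_2\cup D_4)=\varnothing$. An SDR therefore exists if and only if neither opposite pair $(D_1,D_3)$, $(D_2,D_4)$ is the same singleton, and your cases $t=3,4$ disappear.

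Choosing the order still needs a real case analysis, which is what the paper does. Call a pair coarse if $|S_i\setminus S_j|\ge2$.
\begin{itemize}
\item If two coarse pairs share a set, make them consecutive edges of the cycle. Their differences then lie in different opposite pairs, so neither pair can be a repeated singleton.
\item If the coarse pairs form a nonempty matching, say with $\{S_1,S_2\}$ coarse, test the four orders in which $S_1,S_2$ are consecutive. If all four fail, the four singleton obstructions force exactly the structure of the example above, up to a common core added to all four sets, and then $S_1S_3S_2S_4$ works.
\item Only when no pair is coarse do you reach the sunflower/co-sunflower situation, which your last step handles correctly.
\end{itemize}
Your fallback of repairing a failed greedy choice using the singleton structure is in the spirit of this argument, but that is exactly the part that has to be carried out.
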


\begin{proof}
For a cyclic order $T_1,T_2,T_3,T_4$, put
\[
  D_i=T_i\setminus T_{i+1}
  \qquad(i\in\mathbb Z/4\mathbb Z).
\]
We have $D_i\cap D_{i+1}=\varnothing$: membership in $T_{i+1}$
is required by $D_{i+1}$ and forbidden by $D_i$.  Consequently,
\[
  (D_1\cup D_3)\cap(D_2\cup D_4)=\varnothing.
\]
Thus the four sets $D_i$ have a system of distinct representatives if
and only if each opposite pair $(D_1,D_3)$ and $(D_2,D_4)$ has two
distinct representatives; representatives chosen for different opposite
pairs are automatically distinct.  Each $D_i$ is nonempty, because
consecutive sets have the same size and are distinct.  Hence an opposite
pair fails Hall's condition precisely when its two members are the same
singleton.

Write
\[
  \Delta(P,Q)=|P\setminus Q|=|Q\setminus P|
\]
and call a pair $ij$ \emph{coarse} if $\Delta(S_i,S_j)\ge2$.  If two coarse pairs are adjacent, extend them to a Hamilton cycle on the four labels.  The two coarse directed differences belong to different opposite pairs, so neither opposite pair can be the same-singleton obstruction.

It remains to consider the case where the graph of coarse pairs is a matching.  Suppose first that it is nonempty, and relabel so that $12$ is coarse.  Then $13,14,23,24$ are thin, meaning that their $\Delta$-value is one.  Consider the cycles
\[
  1234,\quad1432,\quad1243,\quad1342.
\]
In each order, the coarse edge $12$ supplies a difference set of size at
least two to one opposite pair, so only the other opposite pair can
obstruct a rainbow choice.  If all four cycles fail, the four obstructions
are as follows:
\[
\begin{array}{c|c}
\text{cyclic order}&\text{same-singleton obstruction}\\ \hline
1234&S_2\setminus S_3=S_4\setminus S_1=\{a\}\\
1432&S_1\setminus S_4=S_3\setminus S_2=\{b\}\\
1243&S_2\setminus S_4=S_3\setminus S_1=\{c\}\\
1342&S_1\setminus S_3=S_4\setminus S_2=\{d\}
\end{array}
\]
Here $a,c\in S_2$ and $b,d\notin S_2$.  Also $a\ne c$ and $b\ne d$, since either equality would impose contradictory membership in $S_4$ or $S_3$, respectively.  Thus $a,b,c,d$ are pairwise distinct, and
\[
  S_3=(S_2\setminus\{a\})\cup\{b\},\qquad
  S_4=(S_2\setminus\{c\})\cup\{d\},
\]
and
\[
  S_1=(S_2\setminus\{a,c\})\cup\{b,d\}.
\]
For the cycle $1324$, the four directed differences contain, in order, $d,b,c,a$, a contradiction.

Finally, suppose there is no coarse pair.  Write
\[
  S_1=K\cup\{p\},\qquad S_2=K\cup\{q\}.
\]
Every $k$-set $S\notin\{S_1,S_2\}$ satisfying
$\Delta(S,S_1)=\Delta(S,S_2)=1$ has exactly one of the forms
\[
  K\cup\{r\},\quad r\notin K\cup\{p,q\},
  \qquad\text{or}\qquad
  (K\setminus\{r\})\cup\{p,q\},\quad r\in K.
\]
A set of the first form and one of the second form have $\Delta$-value
two.  Hence $S_3,S_4$ have the same form, and all four sets are either
$K\cup\{r_i\}$ or $L\setminus\{r_i\}$, where
$L:=K\cup\{p,q\}$.  In the first case any cyclic order has the
distinct witnesses $r_i$; in the second it has the distinct witnesses
$r_{i+1}$.
\end{proof}

\begin{corollary}\label{cor:four-types}
If four distinct row types occur in $\calO$, then $\calO$ has a terminal-safe three-chain.
\end{corollary}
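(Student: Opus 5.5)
Choose four indices $i_1,i_2,i_3,i_4$ whose rows $O_{i_1},\dots,O_{i_4}$ are pairwise distinct, i.e.\ one representative of each of four distinct row types. These are four distinct sets of the same size, so Lemma~\ref{lem:rainbow} applies. Relabel so that the cyclic order it provides is $1,2,3,4$. We then have pairwise distinct elements $z_j\in O_{i_j}\setminus O_{i_{j+1}}$, indices taken modulo $4$.

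The plan is to cut this rainbow $4$-cycle into a three-chain and to use the remaining row to supply the terminal element. Each row meets $C$. Fix $j\in\mathbb Z/4\mathbb Z$ and take the chain
\[
  i_{j+1},\ i_{j+2},\ i_{j+3},\ i_{j},
  \qquad
  b_0=z_{j+1},\ b_1=z_{j+2},\ b_2=z_{j+3}.
\]
The containment conditions $b_k\in O_{i_{j+1+k}}\setminus O_{i_{j+2+k}}$ for $k=0,1,2$ are exactly the conclusions of Lemma~\ref{lem:rainbow}. The elements $b_0,b_1,b_2$ are distinct, and the four indices are distinct because the rows are. The only remaining requirement is a terminal element $d\in O_{i_j}\cap C$ different from $z_{j+1},z_{j+2},z_{j+3}$.

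The main obstacle is this distinctness of $d$, and I would handle it by choosing $j$ suitably. First, $z_{j+3}\in O_{i_{j+3}}\setminus O_{i_j}$, so $z_{j+3}\notin O_{i_j}$ and $d$ can never equal $b_2$. Second, the natural candidate is $d=z_j$ whenever $z_j\in C$, since $z_j$ is distinct from the other three $z$'s by Lemma~\ref{lem:rainbow}. So if some $z_j$ lies in $C$, take that $j$ and $d=z_j$, and we are done.

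Otherwise no $z_j$ lies in $C$. Pick any $c\in O_{i_j}\cap C$, which exists because every row meets $C$. Then $c\notin\{z_1,\dots,z_4\}$, because none of the $z$'s lies in $C$. Hence $c$ is automatically distinct from $b_0,b_1,b_2$, and any $j$ works with $d=c$.

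In both cases we obtain four distinct indices and four pairwise distinct elements satisfying the definition, that is, a terminal-safe three-chain. If a hidden subtlety appears, it would be the requirement that the rows be equal-sized and pairwise distinct as sets, so that Lemma~\ref{lem:rainbow} applies. Choosing one index per row type ensures this.
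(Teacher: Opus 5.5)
Your proposal is correct and is essentially the paper's proof: take the rainbow $4$-cycle from Lemma~\ref{lem:rainbow}, cut it just after a row whose witness $z_j$ lies in $C$ and use $z_j$ as the terminal element. If no witness lies in $C$, cut anywhere and take any element of the final row in $C$, which is then automatically distinct from all the $z_i$.
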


\begin{proof}
Take a rainbow cycle $T_0T_1T_2T_3T_0$ with witnesses $z_i\in T_i\setminus T_{i+1}$.  If some $z_j\in C$, delete the outgoing edge $T_jT_{j+1}$, order the remaining path as
\[
  T_{j+1},T_{j+2},T_{j+3},T_j,
\]
and use $z_j$ as the terminal representative.  If no $z_i$ lies in $C$, delete any edge and choose an arbitrary element of the final row in $C$ as the terminal representative.  In either case the four representatives are distinct.
\end{proof}

The preceding lemma yields the following classification.

\begin{proposition}\label{prop:classification}
Let $|I|\ge4$, and let every row $O_i$ be a $k$-subset of $U$ meeting $C$.  The family has no terminal-safe three-chain if and only if one of the following holds.
\begin{enumerate}
\item There is one row type.
\item There are exactly two row types $X,Y$, and either one has multiplicity one, or both have multiplicity at least two and $|X\setminus Y|=1$.
\item There are exactly three row types of one of the following forms:
\[
  K\cup\{p\},\quad K\cup\{q\},\quad K\cup\{r\},
\]
or
\[
  L\setminus\{p\},\quad L\setminus\{q\},\quad L\setminus\{r\},
\]
where $p,q,r$ are distinct, and the common intersection of the three rows is disjoint from $C$.
\end{enumerate}
\end{proposition}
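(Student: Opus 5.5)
By Corollary~\ref{cor:four-types}, four or more distinct row types already force a terminal-safe three-chain. So the classification reduces to families with at most three row types, and I will prove both directions type by type. Throughout, a chain needs four distinct indices $i_0,\dots,i_3$, and its links are the difference sets $O_{i_j}\setminus O_{i_{j+1}}$. Consecutive rows of the same type give an empty difference, so every link must join two different types. The terminal row only needs some element of $C$ avoiding $b_0,b_1,b_2$.

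\emph{One type.} Every link is empty, so no chain exists.

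\emph{Two types $X,Y$.} Every link alternates between the types, so the type sequence is $XYXY$ or $YXYX$. Such a sequence needs two indices of each type. Hence if one type has multiplicity one, no chain exists.

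Now suppose both multiplicities are at least two. If $|X\setminus Y|=1$, say $X\setminus Y=\{x\}$ and $Y\setminus X=\{y\}$, then the sequence $XYXY$ forces $b_0=b_2=x$. This violates distinctness, and $YXYX$ fails the same way, so no chain exists. If instead $|X\setminus Y|\ge2$, I pick distinct $b_0,b_2\in X\setminus Y$ and any $b_1\in Y\setminus X$. The terminal row is $Y$, so $d$ must lie in $Y\cap C$ and avoid $b_1$. If $Y\cap C=\{b_1\}$, I reverse roles and use $YXYX$, where $d\in X\cap C$ must avoid $b_0,b_2$. The hard subcase is when both attempts fail. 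Then $b_1$ is forced to be the only $C$-element of $Y$, and with $b_1$ chosen freely in $Y\setminus X$ this forces $Y\setminus X=\{b_1\}$. That contradicts $|Y\setminus X|=|X\setminus Y|\ge2$. I therefore expect this step to close with a careful choice of $b_1$, but it needs checking.

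\emph{Three types $X,Y,Z$.} There are at least four rows, so some type repeats. Sequences such as $XYZX$ and $XYXZ$ become available.

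If some pair of types is coarse ($\Delta\ge2$), I will mimic the adjacent-coarse argument of Lemma~\ref{lem:rainbow}. I choose a sequence of three links whose difference sets admit distinct representatives, then fix $d$ in the last row's intersection with $C$. The main obstacle is this Hall-type bookkeeping with a repeated row. The problem is that $b_0$ and $b_2$ may be forced to coincide when the pattern has the form $X\cdot X\cdot$. I would first try to put the repeated type at positions $i_0$ and $i_3$, as in $XYZX$. Then the three links are $X\setminus Y$, $Y\setminus Z$ and $Z\setminus X$, which are pairwise disjoint by the argument in Lemma~\ref{lem:rainbow}. Representatives are then automatically distinct, and $d\in X\cap C$ only has to avoid them.

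When all pairs are thin, the last paragraph of Lemma~\ref{lem:rainbow} shows the three types have the form $K\cup\{p\},K\cup\{q\},K\cup\{r\}$ or $L\setminus\{p\},\dots$. In any chain the $b_j$ are then forced: the elements $p,q,r$ in the first form, and the corresponding elements in the second form. The terminal row's $C$-elements outside $\{b_0,b_1,b_2\}$ are exactly those in the common intersection $K$, respectively $L\setminus\{p,q,r\}$, together with at most one leftover witness. I still need to check which configurations avoid chains. If the common intersection meets $C$, that gives a chain immediately. If it is disjoint from $C$, every row meets $C$ only inside $\{p,q,r\}$, and I will check that each admissible sequence's terminal $C$-element coincides with one of the $b_j$. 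This gives exactly case (iii), and the converse direction is the same computation.
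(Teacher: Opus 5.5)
Your treatment of one type, two types, and the all-thin three-type case is essentially sound. In the two-type case the hedge is unnecessary: fix $d\in Y\cap C$ first and then take $b_1\in(Y\setminus X)\setminus\{d\}$.

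\textbf{The gap: three types with a coarse pair.} This is where you must show that a chain-free three-type family has only thin pairs. You observe that in $XYZX$ the links $X\setminus Y$, $Y\setminus Z$, $Z\setminus X$ are pairwise disjoint, and conclude that $d\in X\cap C$ ``only has to avoid them''. That is exactly the step that can fail: $X\cap C$ may be a singleton equal to the unique element of $X\setminus Y$ or of $Y\setminus Z$. Your paragraph never uses coarseness, so read literally it would also produce chains in the chain-free families of (iii). For instance, if $X=K\cup\{p\}$ with $K\cap C=\varnothing$, the ordering $XYZX$ forces $b_0=p=d$.

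Coarseness does not rescue a fixed ordering either. Take $X=\{1,2,3\}$ with multiplicity two, $Y=\{2,3,4\}$, $Z=\{1,5,6\}$ and $C=\{1,4\}$. The pairs $XZ$ and $YZ$ are coarse, yet $XYZX$ forces $b_0=1=d$. By contrast, $XZYX$ succeeds with representatives $2,5,4,1$. The analogy with the adjacent-coarse step of Lemma~\ref{lem:rainbow} breaks down because $X\cap C$ is not a difference set and can meet both $X\setminus Y$ and $Y\setminus Z$. So your four representative sets do not split into two pairs with disjoint unions.

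\textbf{How to close it.} You need an analysis of when the available orderings fail simultaneously, and your route can be completed. With $X$ the repeated type, $XYZX$ fails if and only if one of the following holds: $X\setminus Y$ is a singleton equal to $X\cap C$; $Y\setminus Z$ is a singleton equal to $X\cap C$; or both are singletons whose union is $X\cap C$. The same holds for $XZYX$ with $X\setminus Z$ and $Z\setminus Y$. Each of the nine combinations of failure conditions is either contradictory or forces all three pairs to be thin. It is contradictory when $|X\cap C|$ would be both $1$ and $2$, or when one element would lie both in $Y$ and outside $Y$. In the remaining combinations your thin-case analysis finishes the argument.

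\textbf{The paper's route.} The paper avoids the coarse/thin split by using the orderings $YXZX$ and $ZXYX$ instead. There the representative sets split into an outside pair $Y\setminus X$, $Z\setminus X$ and an inside pair consisting of $X\setminus Z$ (respectively $X\setminus Y$) and $X\cap C$. These two pairs have disjoint unions, so Hall failure is exactly a same-singleton obstruction. Failure of both orderings then means either that $Y\setminus X=Z\setminus X$ is a singleton, or that $X\setminus Y=X\setminus Z=X\cap C$ is a singleton. This yields the two forms of (iii) directly, and in the second case $K\cap C=\varnothing$ comes for free.
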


\begin{proof}
Corollary~\ref{cor:four-types} excludes four row types.  If there is one
row type, then every difference
$O_{i_j}\setminus O_{i_{j+1}}$ is empty, so no terminal-safe
three-chain exists.

Suppose there are two types $X,Y$.  A four-index chain exists only if both types occur at least twice, and then the type sequence must alternate.  Put $P=X\setminus Y$ and $Q=Y\setminus X$.  If $|P|\ge2$, choose two distinct representatives from the two copies of $P$, choose $d\in Y\cap C$, and choose a representative from $Q$ distinct from $d$; this is possible because $|Q|=|P|\ge2$.  Conversely, if $|P|=1$, the two $P$-positions cannot receive distinct representatives.  This proves (ii).

Now suppose there are three types.  Some type, say $X$, occurs at least twice; call the other types $Y,Z$.  For the type sequence $Y,X,Z,X$, the four representative sets are
\[
  Y\setminus X,\quad X\setminus Z,\quad Z\setminus X,\quad X\cap C.
\]
All four sets are nonempty: the first three because the row types are
distinct and equicardinal, and the last because every row meets $C$.
The first and third lie outside $X$, whereas the second and fourth lie
inside $X$.  Thus the union of the outside pair is disjoint from the
union of the inside pair.  As in Lemma~\ref{lem:rainbow}, an SDR exists
if and only if each pair has two distinct representatives, and a pair
fails precisely when its two members are the same singleton.  Hence
Hall's condition fails exactly when
\begin{equation}\label{eq:outside}
  Y\setminus X=Z\setminus X=\{p\},
\end{equation}
or
\begin{equation}\label{eq:inside-z}
  X\setminus Z=X\cap C=\{q\}.
\end{equation}
For the sequence $Z,X,Y,X$, failure is equivalent to~\eqref{eq:outside} or
\begin{equation}\label{eq:inside-y}
  X\setminus Y=X\cap C=\{q\}.
\end{equation}
Thus either~\eqref{eq:outside} holds, or both~\eqref{eq:inside-z} and~\eqref{eq:inside-y} hold.

In the first case, equal row sizes give
\[
  X=L\setminus\{p\},\qquad Y=L\setminus\{y\},\qquad Z=L\setminus\{z\}
\]
for distinct $p,y,z$.  In the second case,
\[
  X=K\cup\{q\},\qquad Y=K\cup\{p\},\qquad Z=K\cup\{r\}
\]
for distinct $p,q,r$.  Thus the types have one of the two forms in
part~(iii).

In the first form, if the common intersection contained $d\in C$, then
the sequence $X,Y,X,Z$ would have the three distinct hub
representatives together with $d$, giving a terminal-safe three-chain.
In the second form, $X\cap C=\{q\}$ already implies
$K\cap C=\varnothing$.  Conversely, for either family with $C$-free
common intersection, every difference representative and every
terminal representative lies in the same three-point hub.  Four
distinct representatives are impossible.
\end{proof}

\begin{corollary}\label{cor:stability}
If a row family indexed by $I$, $|I|\ge4$, has no terminal-safe three-chain, then either
\begin{enumerate}
\item one row type has multiplicity at least $|I|-1$, or
\item
\[
  \left|\left(\bigcup_{i\in I}O_i\right)
  \setminus\left(\bigcap_{i\in I}O_i\right)\right|\le3.
\]
\end{enumerate}
\end{corollary}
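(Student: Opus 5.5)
The plan is to read the corollary off Proposition~\ref{prop:classification} by checking its three alternatives one at a time. Since $|I|\ge4$ and the family has no terminal-safe three-chain, the family falls under case (i), (ii) or (iii) of that proposition. For each case I will show that conclusion (i) or conclusion (ii) of the corollary holds.

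\emph{Case (i).} There is a single row type. Its multiplicity is $|I|$, so conclusion (i) holds. (In addition, the union equals the intersection, so (ii) holds as well.)

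\emph{Case (ii).} There are exactly two row types $X,Y$. Suppose first that one of them has multiplicity one. Then the other has multiplicity $|I|-1$, and conclusion (i) holds. Otherwise $|X\setminus Y|=|Y\setminus X|=1$. The union of all rows is $X\cup Y$ and the intersection is $X\cap Y$. Their difference is the symmetric difference $(X\setminus Y)\cup(Y\setminus X)$, which has exactly two elements, so conclusion (ii) holds.

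\emph{Case (iii).} There are exactly three row types. In the first form they are $K\cup\{p\},K\cup\{q\},K\cup\{r\}$ with $p,q,r$ distinct. The union is $K\cup\{p,q,r\}$ and the intersection is $K$, so the difference is $\{p,q,r\}$, of size $3$. In the second form they are $L\setminus\{p\},L\setminus\{q\},L\setminus\{r\}$. The union is $L$ and the intersection is $L\setminus\{p,q,r\}$, so again the difference has size $3$. Either way conclusion (ii) holds. The extra requirement in (iii) that the common intersection avoid $C$ plays no role here.

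No step is a real obstacle. The only point needing care is the two-type case with both multiplicities at least two: there we use $|X\setminus Y|=|Y\setminus X|$, which holds because the rows are equicardinal, so the symmetric difference has size exactly $2\le3$.
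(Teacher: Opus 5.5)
Your proof is correct and follows the paper's own argument. Both read the corollary directly off Proposition~\ref{prop:classification}: a singleton type leaves the other type with multiplicity $|I|-1$, two equicardinal types with $|X\setminus Y|=1$ have variation two, and each three-type form has variation exactly the hub $\{p,q,r\}$; you additionally spell out the trivial one-type case, which the paper leaves implicit.
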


\begin{proof}
In Proposition~\ref{prop:classification}(ii), a singleton type leaves the other type with multiplicity $|I|-1$; otherwise the two types have total variation two.  In case (iii), the total variation is exactly the three-point hub.
\end{proof}

\section{The root-dominating balanced cut}\label{sec:first-port}

In this section, $G$ has order $3m$ and minimum semidegree at least $m$, $(A,B)$ is a balanced cut, and $x\in B$ satisfies $x\to A$.  Put
\[
  C=N_B^-(x),\qquad U=B\setminus\{x\}.
\]
Every row $O(a)$ is an $m$-subset of the $(2m-1)$-set $U$, while $|C|\ge m$.  Hence
\begin{equation}\label{eq:row-meets-C}
  O(a)\cap C\ne\varnothing
  \qquad\text{for every }a\in A.
\end{equation}

We first handle $C_6$ directly.

\begin{proposition}\label{prop:first-C6}
If $m\ge9$, then $x$ lies on a copy of $C_6$.
\end{proposition}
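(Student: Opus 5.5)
The plan is to build a $6$-cycle of the shape
\[
  x\to a_0\to b\to c\to a_1\to d\to x,
\]
where $a_0\to b\to c\to a_1$ is a $D_3$-transition into $a_1$, $d\in O(a_1)\cap C$, and $d\notin\{b,c\}$. This shape is natural for two reasons. First, $x\to A$ makes the first arc free for every choice of $a_0\in A$. Second, since $A$ is independent and $x$ has no inneighbour in $A$, the closing vertex must lie in $C=N_B^-(x)$.

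Pure alternating walks through $A$ and $B$ starting at $x$ produce odd lengths ($x a_0 b_0 a_1 b_1 x$ has length five). So exactly one $B$–$B$ arc is needed, and the $D_3$-transition supplies it.

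\emph{Step 1: choose $a_1$ and a transition into it.} Because $m\ge9\ge3$, Lemma~\ref{lem:dead} shows that all but at most three vertices of $A$ admit a $D_3$-transition. Fix such an $a_1$ and a transition $a_0\to b\to c\to a_1$ with $b,c\in B\setminus\{x\}$ and $a_0\ne a_1$. The vertex $x$ differs from $a_0,a_1$ because $x\in B$. It differs from $b$ and $c$ by the definition of a $D_3$-transition.

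\emph{Step 2: choose the closing vertex $d$.} By~\eqref{eq:row-meets-C}, $O(a_1)\cap C\neq\varnothing$. Since $c\to a_1$ and each pair in $A\times B$ carries exactly one arc, $c\notin O(a_1)$. Hence any $d\in O(a_1)\cap C$ is automatically distinct from $c$, and the only possible collision is $d=b$. If $O(a_1)\cap C$ contains an element other than $b$, we are done. So assume $O(a_1)\cap C=\{b\}$ for every admissible transition into $a_1$.

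\emph{Step 3: the main obstacle, the case $O(a_1)\cap C=\{b\}$.} This is where I expect the real work. Since $|O(a_1)|=m$, $|C|\ge m$ and $|U|=2m-1$, a singleton intersection forces $|C|=m$ and $O(a_1)\cup C=U$. The first attempt is to re-route the middle vertex. Call $(b',c')$ \emph{admissible} for $a_1$ if $c'\in J_{a_1}$, $b'\to c'$, and $b'$ has an $A$-inneighbour other than $a_1$. If some admissible $(b',c')$ has $b'\neq d$, where $d$ is the unique element of $O(a_1)\cap C$, we get the cycle with $b'$ in place of $b$. Otherwise every admissible pair passes through $d$. Then every $c'\in J_{a_1}$ whose $B$-inneighbourhood in $B\setminus\{x\}$ is not contained in $L_{a_1}$ must be an outneighbour of $d$. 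Running the counting argument of Lemma~\ref{lem:dead} with $L_{a_1}\cup\{d\}$ in place of $L_{a_1}$ should show that the resulting structure is too rigid: the $m-1$ vertices of $J_{a_1}$ need $t_{c'}\ge1$ and $d_B^-(c')\ge t_{c'}$, but almost all of their $B$-inneighbours are confined to $L_{a_1}\cup\{d\}\cup\{x\}$. This contradicts the bookkeeping~\eqref{eq:bookkeeping} via $\sum_b t_b=m^2$ once $m\ge9$.

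If this single-vertex argument does not close, the second attempt is to vary $a_1$. At least $m-3\ge6$ vertices of $A$ admit transitions. If each of them has a singleton $O(a_1)\cap C$ absorbed by the transition's middle vertex, then all these rows equal $(U\setminus C)\cup\{d_{a_1}\}$. Comparing two such rows $a_1,a_1'$ with $d_{a_1}\ne d_{a_1'}$, the transition into one can be taken through the other's closing vertex. This yields the required distinctness. If instead all $d_{a_1}$ coincide, the rows coincide, and a degree count on $d$ (which then has $s_d\ge m-3$) again contradicts~\eqref{eq:bookkeeping} for $m\ge9$.
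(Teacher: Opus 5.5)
Your Steps~1--2 coincide with the paper's first case. Your reduction in the second attempt (a singleton $O(a)\cap C$ forces $|C|=m$ and $O(a)=(U\setminus C)\cup\{d_a\}$) is also exactly the paper's. The gap is in the step you yourself flag as the real work: neither route you sketch produces a transition into an extendable $a$ whose middle vertex differs from $d_a$.

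\textbf{The single-row route cannot work.} Write $D=U\setminus C$, $C'=C\setminus\{d\}$ and $A=\{a_1\}\cup A'$. Take $x\to A$, $O(a_1)=D\cup\{d\}$, and $O(a)=C$ for $a\in A'$. Inside $B$, put only the arcs $C\to x$, $d\to C'$, $C'\to D$ and $D\to d$. Every vertex then has in- and outdegree at least $m$, and $a_1$ is extendable. Yet every transition into $a_1$ has the form $a\to d\to c'\to a_1$. So no count at $a_1$ alone contradicts $\sum_b t_b=m^2$, and the $C_6$ through $x$ must come from another row.

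\textbf{The second attempt is not justified.} The claim that ``the transition into one can be taken through the other's closing vertex'' is asserted without reason. A transition $a_0\to b\to d_{a_1'}\to a_1$ needs an inneighbour $b\ne d_{a_1}$ of $d_{a_1'}$ that has an $A$-inneighbour other than $a_1$, and nothing you say supplies one. In the one-label subcase, a degree count on $d$ gives nothing. The bound $s_d\ge m-3$ only says $d$ has many outneighbours in $B$, and arcs $d\to c$ with $c\in C$ create precisely the transitions through $d$ that you are trying to rule out.

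\textbf{The missing idea} is to count arcs leaving $D$, the part common to all extendable rows. Each $b\in D$ lies in at least $m-3$ rows, so $d_B^+(b)\ge s_b\ge m-3$. Since $D\cap C=\varnothing$, no vertex of $D$ sends an arc to $x$, and $D$ spans at most $\binom{m-1}{2}$ arcs. Hence
$e(D,C)\ge(m-1)(m-3)-\binom{m-1}{2}=\tfrac{(m-1)(m-4)}{2}>m-1$ for $m\ge7$.

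It follows that the arcs from $D$ to $C$ cannot all end at a single vertex that also equals every label $d_a$. So there is an arc $b\to c$ with $b\in D$, $c\in C$, together with an extendable $a$ satisfying $d_a\ne c$. Then $c\to a$ because $c\notin O(a)$. Any other extendable $a_0$ satisfies $a_0\to b$ because $b\in D\subseteq O(a_0)$. Thus $x\to a_0\to b\to c\to a\to d_a\to x$ is a $C_6$.

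This single count handles both of your subcases at once, and it is the paper's argument. The paper additionally subtracts possible arcs from $D$ to $x$, which is why it needs $m\ge9$.
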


\begin{proof}
Let $W$ be the set of nonextendable endpoints from Lemma~\ref{lem:dead}, so $|W|\le3$.  Suppose first that some $a\in A\setminus W$ satisfies $|O(a)\cap C|\ge2$.  Choose a $D_3$-transition
\[
  a_0\to b\to c\to a.
\]
Since $c\to a$, we have $c\notin O(a)$.  Choose
\[
  d\in (O(a)\cap C)\setminus\{b\}.
\]
Then
\[
  x\to a_0\to b\to c\to a\to d\to x
\]
is a $C_6$.

We may therefore assume that $|O(a)\cap C|=1$ for every $a\in A\setminus W$.  Since there is at least one such row, the inequality
\[
  |O(a)\cap C|\ge |O(a)|+|C|-|U|=|C|-m+1
\]
implies $|C|\le m$.  On the other hand, $x\to A$, so $x$ has no
inneighbour in $A$ and
\[
  |C|=d_B^-(x)=d^-(x)\ge m.
\]
Thus $|C|=m$.  Put $D=U\setminus C$, so $|D|=m-1$.  Every extendable row has the form
\[
  O(a)=D\cup\{d_a\},\qquad d_a\in C.
\]
Thus $s_b\ge m-3$ for every $b\in D$, and~\eqref{eq:bookkeeping} gives
\begin{align}
e(D,C)
&\ge (m-1)(m-3)-\binom{m-1}{2}-(m-1)\notag\\
&=\frac{(m-1)(m-6)}2.\label{eq:C6-DC}
\end{align}
For $m\ge9$,
\[
  \frac{(m-1)(m-6)}2-(m-1)
  =\frac{(m-1)(m-8)}2>0.
\]
We claim that there are $b\in D$, $c\in C$, and $a\in A\setminus W$ such that
$b\to c$ and $d_a\ne c$.  Otherwise, for every arc $b\to c$ from
$D$ to $C$ and every extendable row label $d_a$, we would have
$d_a=c$.  The lower bound~\eqref{eq:C6-DC} is positive, and
$A\setminus W$ is nonempty, so all targets of arcs from $D$ to $C$
and all extendable row labels would equal a single vertex $c_*$.  This
would give
\[
  e(D,C)=e(D,\{c_*\})\le |D|=m-1,
\]
a contradiction.  Since $|A\setminus W|\ge m-3\ge2$, choose another
extendable vertex $a_0\ne a$.  Then
\[
  x\to a_0\to b\to c\to a\to d_a\to x
\]
is a $C_6$.
\end{proof}

We now close all longer multiples of three under the linear order
hypothesis of Theorem~\ref{thm:main}.

\begin{proposition}\label{prop:first-port}
Let $q\ge3$, put $\ell=3q$, and suppose $n=3m\ge n_0(q)$.
Then $x$ lies on a copy of $C_\ell$.
\end{proposition}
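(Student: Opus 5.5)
The plan is to build a rooted skeleton $x\to a_0\to\cdots$ inside the balanced cut and close it by Lemma~\ref{lem:extension-deletion}(iii). By Remark~\ref{rem:constant-track}, the deletion budget must stay at most $R=\ell-1$, which gives $15R+7=45q-8\le n$. The proof splits according to Proposition~\ref{prop:classification}, applied to the row family $\calO=(O(a):a\in A\setminus W)$ on the ground set $U$ with terminal set $C$. Here $W$ is the set of at most three nonextendable vertices from Lemma~\ref{lem:dead}. Every row meets $C$ by~\eqref{eq:row-meets-C}, and $|A\setminus W|\ge m-3\ge4$.

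\emph{Branch 1: $\calO$ has a terminal-safe three-chain.} Take indices $a_0,a_1,a_2,a_3$ and distinct $b_0,b_1,b_2,d$ with $b_j\in O(a_j)\setminus O(a_{j+1})$ and $d\in O(a_3)\cap C$. Since $b_j\notin O(a_{j+1})$ and each pair in $A\times B$ carries exactly one arc (Lemma~\ref{lem:balanced-cut}), we get $b_j\to a_{j+1}$. This yields the path
\[
  x\to a_0\to b_0\to a_1\to b_1\to a_2\to b_2\to a_3\to d\to x,
\]
which is a closed cycle of length $9$. For $q=3$ this already finishes. For $q\ge4$, remove the arc $d\to x$ and use instead: the vertex $d$ as the start of a greedy extension by Lemma~\ref{lem:extension-deletion}(i), followed by short linking back to $x$. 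To do this, first produce three variant skeletons of lengths $B-5,B-4,B-3$ ending at a common vertex. The simplest way is to end at $a_3$ and vary the prefix: use $x\to a_0$, or $x\to a'\to b'\to c'\to a_0$ via a $D_3$-transition, or similar. If that is awkward, apply Lemma~\ref{lem:short-link} directly in the graph with the skeleton deleted. The budget check is $|Z|+\ell-B\le\ell-1$ together with $|Z|+1+\ell-B\le m$, which holds since $m\ge15q-3$.

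\emph{Branch 2: no terminal-safe three-chain.} Corollary~\ref{cor:stability} then gives one of two outcomes.

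(a) One row type $O_*$ has multiplicity at least $|A\setminus W|-1\ge m-4$.

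(b) The rows of $A\setminus W$ agree outside a hub of at most three points.

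In both cases almost all of $A$ shares a common $(m-O(1))$-subset $K\subseteq U$ of outneighbours. The bookkeeping~\eqref{eq:bookkeeping} then forces $s_b\ge m-4$ for $b\in K$. Hence vertices of $K$ have outdegree at least $m-4$ into $B$, and by Lemma~\ref{lem:elementary}(i) many of these arcs leave $K$, going into $U\setminus K$ or into $C$. This mirrors the proof of Proposition~\ref{prop:first-C6}. From it I would find arcs $b\to c$ with $b\in K$ and $c\to a$, which give many disjoint $D_3$-transitions $a'\to b\to c\to a$. These transitions can be concatenated as ``$A\to K\to (B\setminus K)\to A$'' blocks of length $3$. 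Chaining $q-1$ such blocks and closing with $a\to d\to x$, where $d\in O(a)\cap C$, gives length $1+3(q-1)+2=3q$. Disjointness comes from a matching argument: a Hall-type count using $e(K,U\setminus K)\ge\Omega(m^2)$, which needs only $m\ge2q+11$ as the Remark states.

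The main obstacle is Branch 2. The equality structure may make $C$ nearly equal to $U\setminus K$ and route every $K\to B\setminus K$ arc into a single vertex, so the counting of Proposition~\ref{prop:first-C6} must be redone with multiplicity. I would first try to show that the arcs from $K$ to $C\setminus\{\text{hub}\}$ form a bipartite graph of minimum degree linear in $m$, extract a matching of size $q$ greedily, and then handle the residual hub configurations of Proposition~\ref{prop:classification}(iii) by hand.
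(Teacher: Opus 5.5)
Your architecture is the paper's: Corollary~\ref{cor:stability} applied to the rows indexed by $A_0=A\setminus W$ with terminal set $C$, the nine-cycle for $q=3$, Lemma~\ref{lem:extension-deletion}(iii) with budget $\ell-1$ for $q\ge4$, and matchings in the two stable outcomes. However, the $q\ge4$ step of Branch~1 is not actually supplied.

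\textbf{Branch 1.} Lemma~\ref{lem:extension-deletion}(iii) needs three $x$--$a$ paths of \emph{consecutive} lengths. The variation you suggest (entering $a_0$ directly, or through a $D_3$-transition $x\to a'\to b'\to c'\to a_0$) gives $x$--$a_3$ paths of lengths $7$ and $10$, which differ by three. Your fallback, applying Lemma~\ref{lem:short-link} once after deleting the skeleton, does not repair this. The link length $t\in\{3,4,5\}$ is not under your control, so the other side must already offer all three complementary lengths.

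The missing observation is to shorten the chain rather than lengthen the prefix. Because $x\to A$, the suffixes give $x a_2 b_2 a_3$ and $x a_1 b_1 a_2 b_2 a_3$, of lengths $3$ and $5$. Since $a_3\notin W$, a $D_3$-transition $a'\to u\to v\to a_3$ into the \emph{terminal} vertex gives $x a' u v a_3$ of length $4$. This path is automatically simple, because $a'\ne a_3$ and $u,v\in B\setminus\{x\}$. Then $B=8$, $|Z|\le7$, $R\le\ell-1$ and $15R+7\le45q-8$. The vertex $d$ is not used at all when $q\ge4$.

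\textbf{Branch 2.} The obstacle you single out is not real, and the route you propose around it would not work. A single target can absorb at most $m$ arcs from $K$, while the arc count is quadratic in $m$, so concentration is harmless. A minimum-degree bound for arcs from $K$ into $C$ is neither needed nor guaranteed. A vertex of $K$ may send all of its (at least $m-4$) out-arcs in $B$ inside $K$. Also, the intermediate targets only need $c_i\to a_{i+1}$, that is, $c_i\notin O(a_{i+1})$; only the final vertex $d$ must lie in $C$.

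The real issue is choosing targets so that $c_i\to a_{i+1}$ is forced. Your target set $U\setminus K$ fails this in the hub case, since a hub point may lie in $O(a_{i+1})$. The paper proceeds as follows.
\begin{itemize}
\item In the dominant-row case it takes sources $S\setminus\{d\}$ with $d\in S\cap C$, targets $(B\setminus S)\setminus\{x\}$, and only type-$S$ vertices as the $a_i$.
\item In the hub case it takes sources $K\setminus\{d\}$ with $K=\bigcap_{a\in A_0}O(a)$, and targets $L\setminus\{x\}$, where $L$ is the set of vertices of $B$ lying in no row of $A_0$. Then $c\to a$ for every $a\in A_0$, so the configurations of Proposition~\ref{prop:classification}(iii) never need separate treatment.
\end{itemize}
These bipartite graphs have at least $(m^2-11m)/2$ and $(m^2-15m+10)/2$ edges, respectively. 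A vertex cover of size $q-2$ meets at most $(q-2)m$ edges, so K\"onig's theorem gives a matching of size $q-1$ once $m\ge2q+11$. This yields $x\to a_0\to b_0\to c_0\to a_1\to\cdots\to a_{q-1}\to d\to x$. Your ``Hall-type count'' sentence is exactly this argument; you should commit to it with these source and target sets instead of the minimum-degree plan.
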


\begin{proof}
Since $n=3m\ge n_0(q)$,
\[
  m\ge
  \begin{cases}
    33,&q=3,\\
    15q-2,&q\ge4.
  \end{cases}
\]
In particular,
\begin{equation}\label{eq:first-port-numerics}
  m\ge\ell,\qquad m\ge2q+11,\qquad m-4\ge q.
\end{equation}
Let $W$ be the nonextendable-endpoint set and put $A_0=A\setminus W$.
Thus $|W|\le3$ and every $a\in A_0$ admits a $D_3$-transition.  Apply
Corollary~\ref{cor:stability} to the rows indexed by $A_0$, with terminal
set $C$.

The proof follows the three outcomes in Corollary~\ref{cor:stability}.
A terminal-safe three-chain gives three possible initial lengths and is
closed by short linking.  In each of the two remaining outcomes,
\eqref{eq:bookkeeping} gives a dense bipartite graph, and K\"onig's
theorem supplies the required matching.

\smallskip
\noindent\emph{Case 1: a terminal-safe three-chain.}
Suppose first that there is a terminal-safe three-chain.  It yields distinct vertices with
\begin{equation}\label{eq:safe-chain}
  x\to a_0\to b_0\to a_1\to b_1\to a_2\to b_2\to a_3\to d\to x.
\end{equation}
If $q=3$, this is already a $C_9$.  Assume $q\ge4$.  The suffixes of~\eqref{eq:safe-chain} give $x$--$a_3$ paths
\[
  Q_3=xa_2b_2a_3,\qquad
  Q_5=xa_1b_1a_2b_2a_3
\]
of lengths three and five.  Since $a_3\notin W$, take a transition
\[
  a'\to u\to v\to a_3.
\]
Then
\[
  Q_4=xa'uva_3
\]
is a simple path of length four.  Different candidate paths may intersect; only one will eventually be used.  Let $Z$ be the union of their internal vertices.  Then $|Z|\le7$.

Since $|Z|\le7$, we have
\[
  |Z|+1+\ell-8\le\ell\le m,
  \qquad
  15(|Z|+\ell-8)+7\le15(\ell-1)+7=n_0(q).
\]
Lemma~\ref{lem:extension-deletion}(iii), with $B=8$, now closes one of
$Q_3,Q_4,Q_5$ to a $C_\ell$.

\smallskip
\noindent\emph{Case 2: a dominant row.}
Suppose that a row type $S$ occurs at least
\[
  |A_0|-1\ge m-4
\]
times.  For every $b\in S$, we have $s_b\ge m-4$, so
\begin{equation}\label{eq:heavy-row}
  e(S,B\setminus S)
  \ge m(m-4)-\binom m2
  =\frac{m^2-7m}{2}.
\end{equation}
Choose $d\in S\cap C$, which exists by~\eqref{eq:row-meets-C}, and
form the bipartite graph with parts
\[
  S\setminus\{d\}
  \quad\text{and}\quad
  (B\setminus S)\setminus\{x\},
\]
joining $b$ to $c$ precisely when $b\to c$ in $G$.  Deleting the
source $d$ and the target $x$ removes at most $2m$ arcs
from~\eqref{eq:heavy-row}, so this bipartite graph has at least
$(m^2-11m)/2$ edges.  If its maximum matching had size at most $q-2$,
K\"onig's theorem would give a vertex cover of size at most $q-2$.
Every vertex of the bipartite graph has degree at most $m$, so the cover
would meet at most $(q-2)m$ edges.  By~\eqref{eq:first-port-numerics},
\[
  \frac{m^2-11m}{2}-(q-2)m
  =\frac m2(m-2q-7)>0,
\]
a contradiction.  Take a matching
\[
  b_i\to c_i\qquad(0\le i\le q-2)
\]
and distinct $S$-type vertices $a_0,\ldots,a_{q-1}$.  These choices are
available because $m-4\ge q$ by~\eqref{eq:first-port-numerics}.  Then
\[
  x\to a_0\to b_0\to c_0\to a_1\to\cdots
  \to b_{q-2}\to c_{q-2}\to a_{q-1}\to d\to x
\]
is a $C_{3q}$.

\smallskip
\noindent\emph{Case 3: variation on at most three points.}
Suppose the active variation has size at most three.  Put
\[
  K=\bigcap_{a\in A_0}O(a),\qquad
  V=\left(\bigcup_{a\in A_0}O(a)\right)\setminus K,
  \qquad L=B\setminus(K\cup V).
\]
Write $v=|V|$ and $|K|=m-r$.  Every row is $K\cup P_a$ with $P_a\subseteq V$ and $|P_a|=r$.  If $v>0$, each point of $V$ appears in some but not all rows, so $1\le r\le v-1$.  Hence
\[
  (v,r)\in\{(0,0),(2,1),(3,1),(3,2)\}.
\]
Every $b\in K$ belongs to all rows indexed by $A_0$, and therefore $s_b\ge m-3$.  Since $|L|=m+r-v\le m$, we obtain
\begin{align}
e(K,L)
&\ge |K|(m-3)-\binom{|K|}{2}-|K|v\notag\\
&=\frac{(m-r)(m+r-5-2v)}2\notag\\
&\ge\frac{m^2-11m+10}{2}.
\label{eq:hub-count}
\end{align}
Fix $a_*\in A_0$ and $d\in O(a_*)\cap C$.  Since $x$ belongs to no
row, $x\in L$.  Form the bipartite graph whose left part is
$K\setminus\{d\}$ if $d\in K$ and is $K$ otherwise, whose right part
is $L\setminus\{x\}$, and whose edges are the arcs directed from left
to right.  Deleting the target $x$ and, when necessary, the source $d$
removes at most $2m$ arcs from~\eqref{eq:hub-count}; hence at least
$(m^2-15m+10)/2$ edges remain.  If there were no matching of size
$q-1$, K\"onig's theorem would give a vertex cover of size at most
$q-2$.  Both parts have size at most $m$, so such a cover would meet at
most $(q-2)m$ edges.  By~\eqref{eq:first-port-numerics},
\[
  m^2-(2q+11)m+10>0
\]
and hence there is a matching $b_i\to c_i$ of size $q-1$ from $K$ to
$L$, avoiding $d$ and $x$.  Since $|A_0|\ge m-3\ge q$, choose
distinct $a_0,\ldots,a_{q-1}\in A_0$ with $a_{q-1}=a_*$.  Since every
row contains $K$ and avoids $L$,
\[
  x\to a_0\to b_0\to c_0\to a_1\to\cdots
  \to b_{q-2}\to c_{q-2}\to a_{q-1}\to d\to x
\]
is again a $C_{3q}$.
\end{proof}

\section{The transitive-entry balanced cut}\label{sec:second-port}

We now consider a balanced cut $(A,B)$ rooted at $z$, together with distinct vertices $x,h,z\in B$ satisfying
\begin{equation}\label{eq:external-port}
  z\to A,\qquad x\to h\to z,\qquad x\to z.
\end{equation}
The prescribed vertex is $x$, not $z$.

\begin{proposition}\label{prop:second-port}
Let $q\ge2$, put $\ell=3q$, and suppose $n=3m\ge n_0(q)$.
Under~\eqref{eq:external-port}, the vertex $x$ lies on a copy of $C_\ell$.
\end{proposition}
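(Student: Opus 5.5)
The plan is to reduce to the machinery of Section~\ref{sec:first-port}, with $z$ playing the role of the root-dominating vertex. The vertex $x$ enters $z$ by two paths, $xz$ of length one and $xhz$ of length two. The first step is to locate $x$ and $h$ relative to the cut. Since $x\to z$ and $z\to A$, we have $x\notin A$ (because $A=N^+(z)$ and the graph is oriented), so $x,h,z\in B$. Put $C'=N_B^-(x)$ and $U'=B\setminus\{z\}$. Every row $O(a)$ is an $m$-subset of $U'$, and each row meets $N_B^-(x)$ up to a bounded defect coming from $h$ and $z$. Any closed walk
\[
  x\to z\to a_0\to b_0\to c_0\to a_1\to\cdots\to a_{k}\to d\to x,
\]
or its variant through $h$, has length congruent to $1$ or $2$ modulo $3$ plus the number of $D_3$-transitions. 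So a mixed use of the one-arc and two-arc entries, together with one short path of length $3$, $4$, or $5$ at the end, gives the needed residue flexibility.

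Next I would rerun the argument of Lemma~\ref{lem:dead} and Corollary~\ref{cor:stability} with $z$ in place of $x$. Lemma~\ref{lem:dead} only used $z\to A$, so at most three vertices of $A$ are nonextendable. The row family on $A_0=A\setminus W$ with terminal set $C=N_B^-(x)\setminus\{h,z\}$ then has three outcomes, handled as follows.

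\emph{Terminal-safe three-chain.} We get $z\to a_0\to b_0\to\cdots\to a_3\to d$ with $d\to x$. Prefixing $xz$ or $xhz$ and taking suffixes yields $x$--$a_3$ paths of several consecutive lengths, and Lemma~\ref{lem:extension-deletion}(iii) closes one of them. For $q=2$, I would instead aim directly for the short cycles $x\,z\,a\,b\,a'\,d\,x$ of length $6$, or use the two entries with a single $D_3$-transition. This is where the deletion budget $R=\ell-4$ in Remark~\ref{rem:constant-track} should arise.

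\emph{Dominant row or at most three variation points.} Here I would copy Cases~2 and~3 of Proposition~\ref{prop:first-port}. The bookkeeping~\eqref{eq:bookkeeping} gives a dense bipartite graph from the common core $K$ (or $S$) to the complement. König's theorem, using $m\ge2q+11$, supplies a matching of size $q-1$ avoiding $x,h,z,d$. The cycle is then
\[
  x\to z\to a_0\to b_0\to c_0\to\cdots\to a_{q-1}\to d\to x.
\]
This has length $3q+1$, so instead I use one fewer matching edge with entry $xhz$, or replace a transition by a direct arc inside $K$. I expect to adjust the length by choosing $d$ with $d\to x$, or by routing the final step through $h$.

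The main obstacle is the residue mismatch. Entry via $z$ costs one or two arcs instead of zero, so the naive cycles have the wrong length modulo $3$. I would try first to absorb this by ending differently: take $a_{q-1}\to d\to h$, and close with $h\to z$ only after leaving $x$ via $x\to h$ is no longer needed. Concretely, I would search for $a\in A_0$ and $d\in O(a)$ with $d\to x$ and a path of length two from $x$ into $A$ avoiding $z$, using that $|N^+(x)|\ge m$ forces an outneighbour of $x$ in $B$ with many arcs into $A$. If that fails, the failure structure (all outneighbours of $x$ avoid $A$) should contradict degree counting, since $\sum t_b=m^2$.
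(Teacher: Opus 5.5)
\textbf{There is a genuine gap.} Your case split rests on the trichotomy of Corollary~\ref{cor:stability} with terminal set $C=N_B^-(x)\setminus\{h,z\}$, and your matching cycles close through $a\to d\to x$ with $d\in C$. Both require every row to meet $C$. Your remark that rows meet $N_B^-(x)$ ``up to a bounded defect'' does not establish this, and in general it fails.

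In a balanced cut each pair in $A\times B$ carries exactly one arc. So either $x\to a$ for some $a\in A$, or $A\to x$. In the second case, $d^-(x)\ge m$ is already supplied by $A$, so $C$ can be empty. For example, take the cyclic blow-up $V_1\to V_2\to V_3\to V_1$ with $|V_i|=m$, add one arc $x\to h$ inside $V_3$, and pick $z\in V_1$. Then $A=V_2\to x$ and $N^-(x)=A$. No row meets $C=\varnothing$, the classification cannot be invoked, and no closing arc $d\to x$ with $d\in B$ exists: every cycle through $x$ must end with an arc $a\to x$, $a\in A$. Hence your fallback claim, that ``all outneighbours of $x$ avoid $A$'' contradicts $\sum_b t_b=m^2$, is false. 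This is the principal case.

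The residue problem you flag is also never resolved. Reaching $A$ through $z$ costs two or three arcs, each block $a\to b\to c\to a'$ costs three, and the $d$-closing costs two. So all your cycles have length $\equiv1$ or $2\pmod 3$. Your repairs give $3q-1$ (entry $xhz$ with one block fewer) or $3q+1$ (entry $x\to w\to a_0$ with $w\ne z$).

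\textbf{The missing idea is to let the dichotomy $x\to a$ versus $A\to x$ decide how the cycle closes.}
\begin{itemize}
\item If $x\to a$ for some $a\in A$, the paths $xa$, $xza$, $xhza$ have lengths $1,2,3$. Lemma~\ref{lem:extension-deletion}(iii) with $B=6$, $Z=\{h,z\}$, $R=\ell-4$ closes one of them.
\item If $A\to x$, then $x$ lies in every row, and the last step is the single arc $a\to x$; this fixes the residue.
  \begin{itemize}
  \item Suppose two rows are distinct. Then some $b\in O(a)\setminus O(a')$ gives $a\to b\to a'$, and the paths $xza'$, $xhza'$, $xzaba'$ have lengths $2,3,4$. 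For $q\ge3$, Lemma~\ref{lem:extension-deletion}(iii) with $B=7$ closes one of them. For $q=2$, after ordering the two rows so that $b\ne h$, the cycle $x\to h\to z\to a\to b\to a'\to x$ works directly.
  \item Suppose all rows equal one set $S$. A K\"onig matching $b_i\to c_i$ of size $q-1$ from $S\setminus\{x\}$ to $(B\setminus S)\setminus\{z\}$ yields $x\to z\to a_0\to b_0\to c_0\to a_1\to\cdots\to a_{q-1}\to x$, of length exactly $3q$.
  \end{itemize}
\end{itemize}
Neither Lemma~\ref{lem:dead} nor the stability classification is needed. Your suffix-path idea in the three-chain case is sound and never uses $d$, but a single pair of distinct rows already supplies the required consecutive lengths.
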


\begin{proof}
Since $n=3m\ge n_0(q)$, we have
\[
  m\ge
  \begin{cases}
    13,&if~q=2,\\
    33,&if~q=3,\\
    15q-2,&if~q\ge4.
  \end{cases}
\]
In particular,
\begin{equation}\label{eq:second-port-numerics}
  m\ge\ell,\qquad m\ge2q-1,\qquad m\ge q.
\end{equation}
We distinguish whether $x$ sends an arc to $A$, whether the rows are
distinct, and whether all rows are equal.  The first two cases use three
initial path lengths and short linking.  The last case uses a bipartite
matching.

\smallskip
\noindent\emph{Case 1: an arc from $x$ to $A$.}
Suppose first that $x\to a$ for some $a\in A$.  There are $x$--$a$ paths of lengths one, two, and three:
\[
  xa,\qquad xza,\qquad xhza.
\]
With $B=6$ and $Z=\{h,z\}$, put
$R=|Z|+\ell-B=\ell-4$.  The two numerical conditions in
Lemma~\ref{lem:extension-deletion}(iii) are
\[
  \ell-3\le m
  \qquad\text{and}\qquad
  15R+7=45q-53\le n_0(q).
\]
Thus that lemma gives a $C_\ell$ through $x$.  This includes the case
$\ell=B=6$.

\smallskip
\noindent\emph{Case 2: two distinct rows.}
We may assume that $A\to x$ and that two rows $O(a),O(a')$ are
distinct.  Since $x$ belongs to every row and $z$ belongs to none, choose
\[
  b\in O(a)\setminus O(a')
\]
with $b\notin\{x,z\}$.  Then $a\to b\to a'$.  If $\ell\ge9$, the three $x$--$a'$ paths
\[
  xza',\qquad xhza',\qquad xzaba'
\]
have lengths two, three, and four.  Apply
Lemma~\ref{lem:extension-deletion}(iii) with $B=7$ and
$Z=\{h,z,a,b\}$.  Indeed,
\[
  |Z|\le4,\qquad |Z|+1+\ell-B\le\ell-2\le m,
  \qquad R=|Z|+\ell-B\le\ell-3,
\]
and, since $q\ge3$,
\[
  15R+7\le45q-38\le n_0(q).
\]

If $\ell=6$, choose the order of the two rows so that $b\ne h$.  This is always possible: if $O(a)\setminus O(a')=\{h\}$, then every element of $O(a')\setminus O(a)$ differs from $h$.  Now
\[
  x\to h\to z\to a\to b\to a'\to x
\]
is a $C_6$.

\smallskip
\noindent\emph{Case 3: all rows are equal.}
It remains that $A\to x$ and all rows are equal to one $m$-set $S$.
Then $x\in S$ and $z\notin S$.  Every $b\in S$ has $s_b=m$, and hence
\[
  e(S,B\setminus S)
  \ge m^2-\binom m2
  =\frac{m(m+1)}2.
\]
Form the bipartite graph with source part $S\setminus\{x\}$, target
part $(B\setminus S)\setminus\{z\}$, and an edge $bc$ whenever
$b\to c$.  Deleting the source $x$ removes at most $m$ arcs.  After
that deletion, the target $z$ is incident with at most $m-1$ remaining
sources.  Equivalently, the known arc $x\to z$ is not counted twice in
the two deletions.  Hence at least
\[
  \frac{(m-1)(m-2)}2
\]
edges remain.  If there were no matching of size $q-1$, K\"onig's
theorem would give a vertex cover of size at most $q-2$, which meets at
most $(q-2)(m-1)$ edges.  By~\eqref{eq:second-port-numerics},
\[
  \frac{(m-1)(m-2)}2-(q-2)(m-1)
  =\frac{m-1}{2}(m-2q+2)>0,
\]
a contradiction.  A matching $b_i\to c_i$ of size $q-1$ and distinct
vertices $a_0,\ldots,a_{q-1}\in A$, available because $m\ge q$
by~\eqref{eq:second-port-numerics}, yield
\[
  x\to z\to a_0\to b_0\to c_0\to a_1\to\cdots
  \to b_{q-2}\to c_{q-2}\to a_{q-1}\to x,
\]
a cycle of length $3q$.
\end{proof}

\section{The butterfly branch and the main theorem}\label{sec:main-proof}

The next lemma adapts~\cite[Lemma~19]{KKO}.  Its point is that, when
$3\mid n$, the butterfly argument lowers the semidegree hypothesis from
$n/3+1$ to $n/3$.

\begin{lemma}\label{lem:butterfly-C6}
Let $G$ be an oriented graph on $n$ vertices, put
$d=\lceil n/3\rceil$, and suppose $d\ge3$ and $\delta^0(G)\ge d$.
If $G$ contains an $xy$-butterfly, then $x$ lies on a copy of $C_6$.
\end{lemma}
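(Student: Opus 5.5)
The butterfly already supplies $x$--$y$ paths of lengths $2$, $3$ and $4$:
\[
  xzy,\qquad xhzy \text{ or } xzby,\qquad xhzby.
\]
To obtain a $C_6$ through $x$, it therefore suffices to find a $y$--$x$ path of length $4$, $3$ or $2$ that avoids the internal vertices of the chosen $x$--$y$ path. The plan is to work with the out-neighbourhood of $y$ and the in-neighbourhood of $x$, after deleting the few butterfly vertices, and to argue by contradiction in the spirit of Step~1 of Lemma~\ref{lem:short-link}.

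Put
\[
  Y^+=N^+(y)\setminus\{x,h,z,b\},\qquad X^-=N^-(x)\setminus\{y,h,z,b\}.
\]
Because of the arcs $zy$ and $by$ (and $x\to h$, $x\to z$), fewer vertices actually need to be removed, so both sets have size at least $d-2$ or so. I would treat the three closing lengths in turn.
\begin{itemize}
\item \emph{Length $2$.} If $Y^+\cap X^-\neq\varnothing$, then together with $xhzby$ we get a $C_6$.
\item \emph{Length $3$.} If there is an arc $Y^+\to X^-$, we close using $xzby$ or $xhzy$. We need one of these two to avoid the two middle vertices; they use $\{z,b\}$ and $\{h,z\}$ respectively, and both sets are already excluded.
\item \emph{Length $4$.} If some $w\notin\{x,y,h,z,b\}$ has $Y^+\to w\to X^-$, we close with $xzy$.
\end{itemize}

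So assume that none of these closings exists. Then $Y^+$ and $X^-$ are disjoint, there are no arcs from $Y^+$ to $X^-$, and $N^+(Y^+)\cap N^-(X^-)$ is contained in the small butterfly set. Now count:
\begin{itemize}
\item By Lemma~\ref{lem:elementary}(i), $|N^+(Y^+)\setminus Y^+|\ge d-(|Y^+|-1)/2$, and this set avoids $X^-$ (up to butterfly vertices).
\item The same holds symmetrically for $N^-(X^-)\setminus X^-$.
\end{itemize}
The four sets $Y^+$, $X^-$, $N^+(Y^+)\setminus Y^+$ and $N^-(X^-)\setminus X^-$ are then essentially pairwise disjoint. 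Their sizes sum to roughly $2(d-2)+2\bigl(d-(d-3)/2\bigr)\approx 3d+$const, which should exceed $n$ when $n\le 3d$.

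The main obstacle is this final count at equality $n=3d$, where the slack is only a constant. Lemma~\ref{lem:elementary}(i) loses up to $(|Y^+|-1)/2$, and the butterfly exclusions subtract a few more, so the naive sum may fall just short of $n$.

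To recover the deficit, I would first try choosing $y$'s neighbourhood sets of size exactly $d$ via a vertex $v\in Y^+$ of minimum outdegree inside $G[Y^+]$. I would also use that $Y^+$ cannot be independent-heavy: independent sets have size at most $n-2d\le d$, with equality only in a balanced-cut structure. In that extremal case I would invoke Lemma~\ref{lem:balanced-cut}-type rigidity, where every pair between the independent set and the rest is an arc, and exhibit a $C_6$ directly, as in Proposition~\ref{prop:first-C6}. Failing that, I would use the alternative butterfly paths, rerouting through $b$ or $h$, to absorb the lost vertices.
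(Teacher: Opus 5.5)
Your overall plan is the paper's plan: close with a $y$--$x$ path of length $2$, $3$ or $4$ against the three butterfly paths, then count $Y$, $X$, $Y'=N^+(Y)\setminus Y$ and $X'=N^-(X)\setminus X$ using Lemma~\ref{lem:elementary}(i). As written, however, the proof does not close. You flag this yourself, but the remedies you list are not carried out. The minimum-outdegree choice is exactly what Lemma~\ref{lem:elementary}(i) already does. The balanced-cut rigidity and the unspecified rerouting are left vague.

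The deficit comes from over-deleting. You remove all of $h,z,b$ from both sides, so you only know $|Y|,|X|\ge d-2$. You also forbid all of $\{h,z,b\}$ as the middle vertex of a length-$4$ closing, so you would need $|X'\cap Y'|\ge4$. Your count gives
\[
n+|X'\cap Y'|\ge 2(d-2)+2\lceil (d+3)/2\rceil+2,
\]
which yields only $|X'\cap Y'|\ge1$ when $n=3d$ and $d$ is odd.

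The missing idea is to record, for each closing length separately, which butterfly vertices must actually be avoided.
\begin{itemize}
\item A $y$--$x$ path of length $2$ avoids $h,z,b$ automatically, since $x\to h$, $z\to y$ and $b\to y$. It closes with $xhzby$.
\item A path $y\to u\to w\to x$ cannot contain $z$, because $z\to y$ and $x\to z$. Paired with $xhzy$, it only has to avoid $h$.
\item A length-$4$ path paired with $xzy$ only has to avoid $z$.
\end{itemize}
So you may take $Y\subseteq N^+(y)\setminus\{x,h\}$ with $|Y|=d-2$, and $X\subseteq N^-(x)\setminus\{y\}$ with $|X|=d-1$; you need not exclude $b$. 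As before, $X\cap Y=X\cap Y'=Y\cap X'=\varnothing$, and $x,y$ lie in none of the four sets. Now $|Y'|\ge\lceil (d+3)/2\rceil$ and $|X'|\ge\lceil (d+2)/2\rceil$, and by parity these sum to at least $d+3$. Hence $n+|X'\cap Y'|\ge 3d+2$, so $|X'\cap Y'|\ge2$.

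Any $w\in(X'\cap Y')\setminus\{z\}$ then gives $y\to u\to w\to v\to x$ avoiding $z$; $w=h$ or $w=b$ is allowed. Closing with $xzy$ gives the $C_6$. No extremal or balanced-cut analysis is needed.
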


\begin{proof}
Write the butterfly vertices as $x,h,z,b,y$, with arcs
\[
  xh,xz,hz,zb,zy,by.
\]
We first translate the absence of a $C_6$ through $x$ into three
restrictions on return paths from $y$ to $x$.  Two neighbourhood layers
then have only one possible overlap, and a cardinality estimate forces
that overlap to contain a vertex different from $z$.

The butterfly contains the $x$--$y$ paths
\[
  xhzby,\qquad xhzy,\qquad xzy
\]
of lengths four, three, and two, respectively.  Any $y$--$x$ path of
length two automatically avoids $h,z,b$: using one of these vertices as
its internal vertex would contradict, respectively, $x\to h$, $z\to y$,
or $b\to y$.  Such a path therefore closes with $xhzby$.  A length-three
$y$--$x$ path cannot contain $z$, because $z\to y$ and $x\to z$; if it
also avoids $h$, it closes with $xhzy$.  Finally, a length-four
$y$--$x$ path avoiding $z$ closes with $xzy$.  Thus, if no $C_6$
through $x$ exists, we may assume that
\begin{enumerate}
\item there is no $y$--$x$ path of length two;
\item there is no $y$--$x$ path of length three avoiding $h$;
\item there is no $y$--$x$ path of length four avoiding $z$.
\end{enumerate}

Choose
\[
  Y\subseteq N^+(y)\setminus\{h,x\},\quad |Y|=d-2,
  \qquad
  X\subseteq N^-(x)\setminus\{y\},\quad |X|=d-1.
\]
Set
\[
  Y'=N^+(Y)\setminus Y,\qquad X'=N^-(X)\setminus X.
\]
The three assumptions imply
\[
  X\cap Y=X\cap Y'=Y\cap X'=\varnothing.
\]
Indeed, a vertex in $X\cap Y$ gives a length-two $y$--$x$ path.  If
$w\in X\cap Y'$, choose $u\in Y$ with $u\to w$; then
$y\to u\to w\to x$.  If $w\in Y\cap X'$, choose $v\in X$ with
$w\to v$; then $y\to w\to v\to x$.  In both length-three paths the
internal vertices avoid $h$, since $h\notin X$ by $x\to h$ and
$h\notin Y$ by the definition of $Y$.
They also imply that $x,y$ lie in none of $X,Y,X',Y'$.  Indeed, the definitions exclude $x$ from $X,Y$ and $y$ from $X,Y$; if $x\in Y'$ or $y\in X'$, then there is a $y$--$x$ path of length two, while orientedness excludes $x\in X'$ and $y\in Y'$.  By Lemma~\ref{lem:elementary},
\[
  |Y'|\ge\left\lceil\frac{d+3}{2}\right\rceil,
  \qquad
  |X'|\ge\left\lceil\frac{d+2}{2}\right\rceil,
\]
and hence $|X'|+|Y'|\ge d+3$.  Since every overlap among the four sets is contained in $X'\cap Y'$, we obtain
\[
  n+|X'\cap Y'|
  \ge |X|+|Y|+|X'|+|Y'|+2
  \ge3d+2.
\]
Since $n\le3d$, we have $|X'\cap Y'|\ge2$.  Choose
$w\in(X'\cap Y')\setminus\{z\}$.  There are vertices $u\in Y$ and
$v\in X$ such that
\[
  y\to u\to w\to v\to x.
\]
Since $z\notin Y$ by $z\to y$, $z\notin X$ by $x\to z$, and
$w\ne z$, this is a length-four path avoiding $z$, contradicting (iii).
\end{proof}

For $q\ge3$, the three paths in a butterfly can be combined with the
short-linking lemma.

\begin{lemma}\label{lem:butterfly-long}
Let $q\ge3$, put $\ell=3q$, and let $G$ be an oriented graph on
$n\ge n_0(q)$ vertices with
$\delta^0(G)\ge\lceil n/3\rceil$.  If $G$ contains an
$xy$-butterfly, then $x$ lies on a copy of $C_\ell$.
\end{lemma}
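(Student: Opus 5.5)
The plan is to mimic the proof of Corollary~\ref{cor:kko-linear}, but with the semidegree only $\lceil n/3\rceil$. The order hypothesis $n\ge n_0(q)$ will absorb the loss, with Lemma~\ref{lem:extension-deletion}(iii) doing the closing. The butterfly on $x,h,z,b,y$ supplies three $x$--$y$ paths
\[
  xzy,\qquad xhzy,\qquad xhzby
\]
of lengths $2$, $3$, $4$. Their internal vertices all lie in $Z=\{h,z,b\}$, and none of them is $x$ or $y$. So I apply Lemma~\ref{lem:extension-deletion}(iii) with $a=y$, $B=7$ and $Z=\{h,z,b\}$; then $j\in\{B-5,B-4,B-3\}=\{2,3,4\}$ matches the three butterfly paths.

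Next I check the two numerical conditions, with $R=|Z|+\ell-B=3+3q-7=3q-4=\ell-4$.

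\emph{Extension condition.} This needs $|Z|+1+\ell-B=\ell-3\le\lceil n/3\rceil$. It holds because $n\ge n_0(q)$ gives $n/3\ge 33>\ell-3$ for $q=3$, and $n/3\ge15q-3\ge 3q-3$ for $q\ge4$.

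\emph{Order condition.} This needs $n\ge15R+7=45q-53$. It holds since $n_0(3)=97\ge 82$ and $45q-8\ge45q-53$.

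Lemma~\ref{lem:extension-deletion}(iii) then yields a $C_\ell$ through $x$. Concretely, part (i) extends from $y$ along a path of length $\ell-7$ avoiding $Z\cup\{x\}$. Part (ii) links its endpoint back to $x$ by a path of length $t\in\{3,4,5\}$ after the deletions, and the cycle is closed with the butterfly path of length $7-t$.

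I do not expect a real obstacle, since Lemma~\ref{lem:extension-deletion} was set up for exactly this. The only care needed is in checking the hypotheses. First, $x,y\notin Z$: this holds because the five butterfly vertices are distinct. Second, the three paths are genuinely of lengths $B-5,B-4,B-3$ with internal vertices in $Z$. Third, the deletion in part (ii) is of $Z$ together with the extension path minus its endpoint. That gives at most $3+(\ell-7)=R$ deleted vertices, which matches the budget.
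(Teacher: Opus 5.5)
Your proposal is correct and is essentially the paper's proof. The paper also applies Lemma~\ref{lem:extension-deletion}(iii) with $a=y$, $B=7$, $Z=\{h,z,b\}$ and $R=\ell-4$, and it checks the same two conditions, $\ell-3\le\lceil n/3\rceil$ and $45q-53\le n_0(q)$. One small slip: for $q=3$ you only get $\lceil n/3\rceil\ge33$ rather than $n/3\ge33$, which is still far more than the required $\ell-3=6$.
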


\begin{proof}
Let $x,h,z,b,y$ be the butterfly vertices, and put
$B=7$ and $Z=\{h,z,b\}$.  The butterfly supplies $x$--$y$ paths of
lengths two, three, and four.  Moreover,
\[
  |Z|+1+\ell-B=\ell-3\le\left\lceil\frac n3\right\rceil,
  \qquad R=|Z|+\ell-B=\ell-4,
\]
and, since $q\ge3$,
\[
  15R+7=45q-53\le n_0(q).
\]
Lemma~\ref{lem:extension-deletion}(iii) gives a $C_\ell$ through $x$.
\end{proof}

We can now prove the critical case.

\begin{theorem}\label{thm:critical}
Let $q\ge2$, put $\ell=3q$, and let $G$ be an oriented graph on
$n=3m\ge n_0(q)$ vertices with $\delta^0(G)\ge m$.  Then every vertex
of $G$ lies on a copy of $C_\ell$.
\end{theorem}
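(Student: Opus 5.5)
Fix the prescribed vertex $x$. The plan follows the three-way split sketched in the introduction: all the work has already been distributed among Lemma~\ref{lem:balanced-cut}, Propositions~\ref{prop:first-C6}, \ref{prop:first-port}, \ref{prop:second-port}, and Lemmas~\ref{lem:butterfly-C6}, \ref{lem:butterfly-long}. What remains is to check that every configuration around $x$ falls into one of these branches, and that the numerical hypotheses of each branch follow from $n=3m\ge n_0(q)$.

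\emph{Branch 1: $N^+(x)$ is independent.} Lemma~\ref{lem:balanced-cut}, applied with $z=x$, gives a balanced cut $(A,B)$ with $A=N^+(x)$ and $x\in B$, $x\to A$. This is exactly the setting of Section~\ref{sec:first-port}.
\begin{itemize}
\item For $q=2$ we invoke Proposition~\ref{prop:first-C6}. Its hypothesis $m\ge9$ holds because $n\ge37$ forces $m\ge13$.
\item For $q\ge3$ we invoke Proposition~\ref{prop:first-port}.
\end{itemize}

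\emph{Branch 2: $G[N^+(x)]$ has an arc $h\to z$, and $N^+(z)$ is independent.} Lemma~\ref{lem:balanced-cut}, applied at $z$, gives a balanced cut rooted at $z$. We must check that $x,h\in B$. Since $x\to z$ and $h\to z$, neither $x$ nor $h$ is an outneighbour of $z$ in an oriented graph, so both lie outside $A=N^+(z)$. Also $z\in B$ trivially, and $x,h,z$ are distinct. Hence \eqref{eq:external-port} holds, and Proposition~\ref{prop:second-port} applies for all $q\ge2$.

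\emph{Branch 3: $N^+(z)$ contains an arc $b\to y$.} Then $x,h,z,b,y$ form an $xy$-butterfly; distinctness is argued as in the proof of Corollary~\ref{cor:kko-linear}.
\begin{itemize}
\item For $q=2$ we apply Lemma~\ref{lem:butterfly-C6} with $d=\lceil n/3\rceil=m\ge3$.
\item For $q\ge3$ we apply Lemma~\ref{lem:butterfly-long}, whose hypothesis is $n\ge n_0(q)$ together with $\delta^0(G)\ge\lceil n/3\rceil=m$.
\end{itemize}

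The only genuine obstacle is bookkeeping: confirming that the lower bounds on $m$ demanded inside each cited result are implied by the three values of $n_0(q)$ in \eqref{eq:n0-def}.
\begin{itemize}
\item For $q=2$: $m\ge13$, which covers $m\ge9$ and $m\ge3$.
\item For $q=3$: $m\ge33$.
\item For $q\ge4$: $m\ge15q-2$.
\end{itemize}
These are precisely the inequalities each proposition derives internally at its start, so the case analysis closes. Since $x$ was arbitrary, every vertex lies on a $C_{3q}$.
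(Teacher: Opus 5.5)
Your proposal is correct and is essentially the paper's own proof. It uses the same split on whether $N^+(x)$ is independent, and otherwise whether $N^+(z)$ is independent for an arc $h\to z$ in $N^+(x)$, with the three branches handled by Propositions~\ref{prop:first-C6} and~\ref{prop:first-port}, Proposition~\ref{prop:second-port}, and Lemmas~\ref{lem:butterfly-C6} and~\ref{lem:butterfly-long}; your explicit checks that $x,h\notin N^+(z)$ (so \eqref{eq:external-port} holds with $x,h,z\in B$) and that $m\ge13$ covers $m\ge9$ when $q=2$ simply fill in details the paper leaves implicit.
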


\begin{proof}
Fix $x\in V(G)$.  If $N^+(x)$ is independent,
Lemma~\ref{lem:balanced-cut} gives the root-dominating balanced cut.
Proposition~\ref{prop:first-C6} applies when $q=2$, and
Proposition~\ref{prop:first-port} applies when $q\ge3$.

Suppose $N^+(x)$ is not independent.  Choose an arc $h\to z$ inside $N^+(x)$.  Thus
\[
  x\to h\to z,\qquad x\to z.
\]
If $N^+(z)$ is independent, Lemma~\ref{lem:balanced-cut} gives the transitive-entry balanced cut, and Proposition~\ref{prop:second-port} applies.

Finally, suppose $N^+(z)$ is not independent.  Choose an arc $b\to y$ inside $N^+(z)$.  The five vertices $x,h,z,b,y$ are distinct, and
\[
  xh,xz,hz,zb,zy,by
\]
form an $xy$-butterfly.  Apply Lemma~\ref{lem:butterfly-C6} for $q=2$ and Lemma~\ref{lem:butterfly-long} for $q\ge3$.
\end{proof}

\begin{proof}[Proof of Theorem~\ref{thm:main}]
Put $\ell=3q$ and fix $x\in V(G)$.  If $3\mid n$, the result is
Theorem~\ref{thm:critical}.  Suppose that $3\nmid n$ and put
$d=\lceil n/3\rceil$.  Lemma~\ref{lem:elementary}(ii) gives
\[
  |I|\le n-2d<d
\]
for every independent set $I$.  Since every outneighbourhood has size
at least $d$, none is
independent.  Choose an arc $h\to z$ inside $N^+(x)$, and then choose
an arc $b\to y$ inside $N^+(z)$.  The five vertices are distinct:
$b,y$ cannot equal $x$ or $h$ because $x\to z$ and $h\to z$.
Thus
\[
  xh,xz,hz,zb,zy,by
\]
form an $xy$-butterfly.  Lemma~\ref{lem:butterfly-C6} applies when
$q=2$, and Lemma~\ref{lem:butterfly-long} applies when $q\ge3$.
\end{proof}

The following construction of Kelly, K\"uhn and Osthus
\cite[discussion following Theorem~4]{KKO}, with relabelled parts,
proves that the semidegree bound
in Corollary~\ref{cor:threshold} is best possible.
On $n-1$ vertices, take three independent sets $V_1,V_2,V_3$ whose
sizes differ by at most one, and add all arcs
\[
  V_1\to V_2,\qquad V_2\to V_3,
  \qquad V_3\to V_1.
\]
Add a vertex $u$ with
\[
  V_1\to u\to V_2
\]
and no arcs between $u$ and $V_3$.  The resulting oriented graph satisfies
the following degree table, where $a=|V_1|$, $b=|V_2|$, and
$c=|V_3|$:
\[
\begin{array}{c|cc}
\text{vertex class}&d^+&d^-\\ \hline
V_1&b+1&c\\
V_2&c&a+1\\
V_3&a&b\\
\{u\}&b&a
\end{array}
\]
Since $a,b,c$ differ by at most one, this gives
\[
  \delta^0(G)
  =\min\{a,b,c\}
  =\left\lfloor\frac{n-1}{3}\right\rfloor
  =\left\lceil\frac n3\right\rceil-1.
\]
Every directed cycle through $u$ consists of the arc from $u$ to $V_2$, a path from $V_2$ to $V_1$ in the cyclic three-part core, and an arc from $V_1$ to $u$.  The middle path has length $2$ modulo $3$, so the whole cycle has length $1$ modulo $3$.  Hence $u$ lies on no $C_{3q}$, proving Corollary~\ref{cor:threshold}.

\begin{remark}
Proposition~\ref{prop:link-sharp} shows that the constant $+3$ in the
linking inequality cannot be reduced and that the coefficient $21$ in
Corollary~\ref{cor:defect-link} is asymptotically best possible.  This
does not determine the optimal coefficients in the order hypotheses of
Theorem~\ref{thm:main} or Corollary~\ref{cor:kko-linear}.

For $q\ge4$, an argument that uses the remaining graph only through its
order and minimum semidegree cannot lower $45q-8$ to $45q-9$.  Indeed,
put $\ell=3q$.  If $n=45q-9=15\ell-9$ and the terminal-safe-chain
branch deletes $r=\ell-1$ vertices, the remaining parameters are
\[
  N:=n-r=14\ell-8,\qquad
  d:=\frac n3-r=4\ell-2,
\]
with
\[
  \delta^0(H)\ge d,\qquad 7d=2N+2.
\]
With $c_0=2\ell-1$, the pair $(N,d)$ is the boundary pair in
Proposition~\ref{prop:link-sharp}.  Hence a smaller cutoff requires
additional information about the remaining graph, such as the
structure of the deleted path or the relation between the in- and
outdegree losses.
It remains open to determine the smallest constant $\gamma$ for which
an order hypothesis of the form $n\ge\gamma q+O(1)$ suffices in
Theorem~\ref{thm:main}.
\end{remark}


\begin{thebibliography}{99}
\footnotesize

\bibitem{BT}
J.-C.~Bermond and C.~Thomassen,
\emph{Cycles in digraphs---a survey},
J. Graph Theory \textbf{5} (1981), no.~1, 1--43.

\bibitem{CMNO}
A.~Czygrinow, T.~Molla, B.~Nagle and R.~Oursler,
\emph{On even rainbow or nontriangular directed cycles},
J. Comb. \textbf{12} (2021), no.~4, 589--662.

\bibitem{DK}
S.~Kh.~Darbinyan and I.~A.~Karapetyan,
\emph{A note on short paths in oriented graphs},
Math. Probl. Comput. Sci. \textbf{33} (2010), 35--40.

\bibitem{GV}
A.~Grzesik and J.~Volec,
\emph{Degree conditions forcing directed cycles},
Int. Math. Res. Not. IMRN \textbf{2023} (2023), no.~11, 9711--9753.

\bibitem{Hag}
R.~H\"aggkvist,
\emph{Hamilton cycles in oriented graphs},
Combin. Probab. Comput. \textbf{2} (1993), no.~1, 25--32.

\bibitem{Jackson}
B.~Jackson,
\emph{Long paths and cycles in oriented graphs},
J. Graph Theory \textbf{5} (1981), no.~2, 145--157.

\bibitem{JWS}
Y.~Ji, S.~Wu and H.~Song,
\emph{On short cycles in triangle-free oriented graphs},
Czechoslovak Math. J. \textbf{68} (2018), no.~1, 67--75.

\bibitem{KKOexact}
P.~Keevash, D.~K\"uhn and D.~Osthus,
\emph{An exact minimum degree condition for Hamilton cycles in oriented graphs},
J. Lond. Math. Soc. (2) \textbf{79} (2009), no.~1, 144--166.

\bibitem{KKOdirac}
L.~Kelly, D.~K\"uhn and D.~Osthus,
\emph{A Dirac-type result on Hamilton cycles in oriented graphs},
Combin. Probab. Comput. \textbf{17} (2008), no.~5, 689--709.

\bibitem{KKO}
L.~Kelly, D.~K\"uhn and D.~Osthus,
\emph{Cycles of given length in oriented graphs},
J. Combin. Theory Ser. B \textbf{100} (2010), no.~3, 251--264.

\bibitem{KOsurvey}
D.~K\"uhn and D.~Osthus,
\emph{A survey on Hamilton cycles in directed graphs},
European J. Combin. \textbf{33} (2012), no.~5, 750--766.

\bibitem{KOP}
D.~K\"uhn, D.~Osthus and D.~Piguet,
\emph{Embedding cycles of given length in oriented graphs},
European J. Combin. \textbf{34} (2013), no.~2, 495--501.

\bibitem{WWZ}
G.~Wang, Y.~Wang and Z.~Zhang,
\emph{Arbitrary orientations of cycles in oriented graphs},
\href{https://arxiv.org/abs/2504.09794}{arXiv:2504.09794v2 [math.CO]}, 2025.

\bibitem{ZY}
J.~Zhou and J.~Yan,
\emph{Semi-degree condition for arbitrary $H$-linked oriented graphs},
\href{https://arxiv.org/abs/2407.06675}{arXiv:2407.06675v2 [math.CO]},
2024, revised 2025.

\end{thebibliography}
\end{document}